\numberwithin{equation}{section}
\newcommand{\MAT}{\left[ \begin{array}}  
\newcommand{\mat}{\end{array} \right]}
\newtheorem{Definition}{Definition}[section]
\newtheorem{Lemma}{Lemma}[section]
\newtheorem{Theorem}{Theorem}[section]
\def \minimize {\operatorname*{minimize}}
\def \st {\operatorname*{subject\ to\ }}
\def \A {\mathbf{A}}
\def \AA {\mathcal{A}}
\def \EEE{\mathbb{E}}
\def \HH {\mathcal{H}}
\def \NN {\mathcal{N}}
\def \PP {\mathcal{P}}
\def \r{\bm{r}}
\def \RRR {\mathbb{R}}
\def \S {\mathbf{S}}
\def \U {\mathbf{U}}
\def \W {\mathbf{W}}
\def \x {\bm{x}}
\def \xs {\bm{x}^\star}
\def \X {\mathbf{X}}
\def \Xt {\widetilde{\mathbf{X}}}
\def \Xh {\widehat{\mathbf{X}}}
\def \Xs {\mathbf{X}^\star}
\def \y {\bm{y}}
\def \Y {\mathbf{Y}}
\def \zero {\mathbf{0}}
\begin{document}
\begin{frontmatter}

\title{Stochastic Iterative Hard Thresholding for Low-Tucker-Rank Tensor Recovery}

\author[1]{Rachel Grotheer}
\author[2]{Shuang Li}
\author[3]{Anna Ma}
\author[4]{ Deanna Needell}
\author[5]{Jing Qin}

\address[1]{Goucher College, rachel.grotheer@goucher.edu.}
\address[2]{Colorado School of Mines, shuangli@mines.edu.}
\address[3]{University of California, Irvine, anna.ma@uci.edu.}
\address[4]{University of California, Los Angeles, deanna@math.ucla.edu.}
\address[5]{University of Kentucky, jing.qin@uky.edu.}





\begin{abstract}

Low-rank tensor recovery problems have been widely studied in many applications of signal processing and machine learning. Tucker decomposition is known as one of the most popular decompositions in the tensor framework. In recent years, researchers have developed many state-of-the-art algorithms to address the problem of low-Tucker-rank tensor recovery. Motivated by the favorable properties of the stochastic algorithms, such as stochastic gradient descent and stochastic iterative hard thresholding, we aim to extend the well-known stochastic iterative hard thresholding algorithm to the tensor framework in order to address the problem of recovering a low-Tucker-rank tensor from its linear measurements. We have also developed linear convergence analysis for the proposed method and conducted a series of experiments with both synthetic and real data to illustrate the performance of the proposed method.


\end{abstract}

\begin{keyword}
Low-rank tensor recovery  \sep Tucker decomposition \sep HOSVD \sep Stochastic Tensor IHT.
\end{keyword}

\end{frontmatter}

\section{Introduction}
\label{intr}

Tensors are high-dimensional extensions of vectors and matrices. 
There are many different kinds of tensor decompositions, among which, the Canonical Polyadic (CP) decomposition and Tucker decomposition are the most popular~\cite{sokolnikoff1951tensor,zhang2017matrix}. In recent years, low-rank tensor recovery problems have gained a great amount of attention in various applications including hyperspectral image restoration~\cite{fan2017hyperspectral}, video processing~\cite{bengua2017efficient}, signal processing~\cite{li2015overcomplete, li2017jazz}, and simultaneous blind deconvolution and phase retrieval~\cite{li2019simultaneous}. Unlike low-rank matrix recovery problems, which often use nuclear norm minimization as a popular heuristic for rank minimization, the computation of the nuclear norm for high order tensors is NP-hard~\cite{hillar2013most, friedland2018nuclear}.

Over the decades, the iterative hard thresholding (IHT) algorithm has been widely used in 
compressive sensing~\cite{blumensath2009iterative,blanchard2015cgiht,carrillo2013lorentzian} and low-rank matrix recovery~\cite{tanner2013normalized,chunikhina2014performance,geng2015accelerated}. It has many extensions, such as the stochastic variant proposed in~\cite{nguyen2017linear}, which was further extended to the multiple measurement vector framework in~\cite{qin2017stochastic}. Inspired by the idea of using the IHT algorithm in low-rank matrix recovery problems, the authors in~\cite{rauhut2017low} extended the IHT algorithm to the tensor framework and proposed the Tensor IHT (TIHT) algorithm as an alternative to the tensor nuclear norm minimization. The authors of~\cite{li2019simultaneous} then combined this TIHT algorithm with higher-order singular value decomposition (HOSVD), a type of Tucker decomposition, to solve a low-rank tensor recovery problem formulated from a simultaneous blind deconvolution and phase retrieval problem. Another recent work~\cite{de2017low} also extends the IHT algorithm to the problem of low-rank tensor recovery based on a low-Tucker-rank approximation technique named sequentially optimal modal projections.   

The stochastic versions of gradient descent algorithms and IHT algorithms usually have many favorable properties. For example, these algorithms do not need to compute the full gradient, which makes it possible for them to be utilized in large scale problems where computing the full gradient is very expensive.   
These properties inspired us to extend the stochastic IHT
algorithm to the tensor framework and introduce the Stochastic Tensor IHT (StoTIHT) algorithm to recover a low-Tucker-rank tensor from its linear measurements. In this work, we provide convergence analysis for the proposed StoTIHT algorithm, based on a Tucker decomposition of the tensor, under the assumption that the linear operator used to obtain the measurements satisfies a tensor restricted isometry property (TRIP). Our simulations also indicate that the proposed StoTIHT algorithm converges much faster than the original TIHT algorithm in a large scale setting. 


The remainder of this work is organized as follows. In Section~\ref{sec:prel}, we briefly review some fundamental concepts and definitions used in the tensor framework. We formulate our low-rank tensor recovery problem in Section~\ref{sec:prob} and present the proposed StoTIHT algorithm in Section~\ref{sec:StoTIHT}. We then introduce the linear convergence analysis for our proposed StoTIHT algorithm in Section~\ref{sec:linCon} and illustrate its performance with both synthetic and real data in Section~\ref{sec:simu}. Finally, we conclude our work in Section~\ref{sec:conc}.

\section{Preliminaries}
\label{sec:prel}


In this section, we briefly review some fundamental concepts and definitions used in the tensor framework~\cite{sokolnikoff1951tensor,zhang2017matrix,kolda2009tensor}. We denote a $d$-th order tensor as $\X \in \RRR^{n_1\times n_2\times\cdots\times n_d}$. Vectors and matrices can be viewed as low-dimensional tensors with $d=1$ and 2, respectively.
Denote $\X^{\{i\}}\in \RRR^{n_i \times (n_1n_2\cdots n_{i-1}n_{i+1} \cdots n_d)}$ as the {\em mode-$i$ matricization} or the {\em $i$-th unfolding} of a tensor $\X\in \RRR^{n_1\times n_2\times\cdots\times n_d}$\footnote{One can refer to~\cite{kolda2009tensor} for a more detailed definition and some easily understandable examples.}. 
Similar to the matrix case, it is possible to vectorize a tensor, resulting in a column vector containing all of the elements of the tensor. Note that the ordering of the elements is not uniquely defined. In this work, the ordering is consistent. In particular,
denote $\text{vec}(\cdot)$ 
as an operator used to vectorize a matrix or a tensor. For a tensor $\X$, we choose the vectorization as $\text{vec}(\X) = \text{vec}(\X^{\{1\}})$. That is, we chose to vectorize the mode-1 matricization of the tensor $\X$ given by $\X^{n_1 \times (n_2n_3\cdots n_d)}$.
The {\em inner product} of two tensors $\X_1,\X_2 \in \RRR^{n_1\times n_2\times\cdots\times n_d}$ is then defined as 
\begin{align*}
\langle \X_1,\X_2 \rangle \triangleq \text{vec}(\X_2)^\top \text{vec}(\X_1).	
\end{align*}
The induced {\em Frobenius norm} is then defined as
\begin{align*}
\|\X\|_F \triangleq \sqrt{\langle \X,\X \rangle}.	
\end{align*}

The {\em Tucker rank} of tensor $\X$ is then defined as a tuple $\r=(r_1,\cdots,r_d)$ with $r_i = \text{rank}(\X^{\{i\}})$. The Tucker decomposition is one of the most popular tensor decompositions and one can find more details in~\cite{tucker1963implications,tucker1964extension}. The family of Tucker decompositions include the HOSVD of a tensor $\X\in \RRR^{n_1\times n_2\times\cdots\times n_d}$, which is given as
\begin{align}
\X = \S \times_1 \U^{(1)} \cdots \times_d \U^{(d)}.	
\label{eqn:tucker_decomp}
\end{align}
Here, $\S\in\RRR^{r_1\times\cdots\times r_d}$ and $\U^{(i)}\in\RRR^{n_i \times r_i}$ denote the core tensor and the basis, respectively. One can refer to~\cite{sokolnikoff1951tensor,zhang2017matrix} for more details about the properties of the core tensor and basis. The product $\times_i$ is the {\em mode-$i$ (matrix) product} of the tensor, that is, the product of a tensor and a matrix along the $i$-th mode of the tensor.

\section{Problem Formulation}
\label{sec:prob}

In this work, we consider the recovery of a rank-$\r$ tensor $\Xs \in \RRR^{n_1\times n_2\times\cdots\times n_d}$ from its linear measurements $\y = \AA(\Xs) \in \RRR^{m}$, where $\AA: \RRR^{n_1\times n_2\times\cdots\times n_d} \rightarrow \RRR^m$ is a linear operator used to obtain the measurements. In particular, the $i$-th element of $\y$ is given as 
\begin{align}
\y(i) = \AA_i(\Xs) = \langle \A_i,\Xs \rangle,	\quad i=1,\ldots,m,
\label{eqn:meas_model}
\end{align}
where $\A_i \in \RRR^{n_1\times n_2\times\ldots\times n_d}$ is a sensing tensor. We observe that the cost function $F(\X)$ can be rewritten as
\begin{equation}
\begin{aligned}
F(\X) & \triangleq \frac{1}{2m} \|\y-\AA(\X)\|_2^2 = \frac{1}{2m} \sum_{i=1}^m (\y(i)-\langle \A_i,\X \rangle)^2	\\
& = \frac 1 M \sum_{i=1}^M \left( \frac{1}{2b} \sum_{j=(i-1)b+1}^{ib} ( \y(j)-\langle \A_j,\X \rangle )^2 \right)\\
& = \frac 1 M \sum_{i=1}^M \frac{1}{2b} \| \y_{b_i}-\AA_{b_i}(\X) \|_2^2
 \triangleq \frac 1 M \sum_{i=1}^M f_i(\X),
\label{eqn:cost}
\end{aligned}
\end{equation}
where we decompose the measurement vector $\y\in\RRR^m$ into $M$ non-overlapping vectors $\y_{b_i}\in\RRR^{b}$, $i=1,\ldots,M$. 
Note that we can choose $b$ to be an integer and let $M = \lceil m/b\rceil$. We denote $\AA_{b_i}: \RRR^{n_1\times n_2\times\cdots\times n_d} \rightarrow \RRR^b$ as a linear operator with the $j$-th entry of $\AA_{b_i}(\X)$ being $\langle \A_{(i-1)b+j},\X \rangle$, $j=1,\ldots b$. It can be seen that each function $f_i(\X)$ is associated with a collection of measurements $\y_{b_i}$.  

Due to the low-rankness of $\Xs$, a standard approach to recover $\Xs$ is to solve the following minimization 
\begin{align}
\minimize_{\X\in \RRR^{n_1\times n_2\times\ldots\times n_d}}~F(\X) \quad \st~\text{rank}(\X)\leq \r, 	\label{eqn:mainopt}
\end{align}
where the cost function $F(\X)$ is defined in~\eqref{eqn:cost}. The above optimization has been heavily studied in existing literature. The authors in~\cite{gandy2011tensor, mu2014square} relax the above optimization by minimizing the sum of the nuclear norm of the tensor matricizations. However, this kind of relaxation is actually non-optimal~\cite{recht2010guaranteed}. Inspired by the idea of using the IHT algorithm for compressive sensing and low-rank matrix recovery problems, the authors in~\cite{rauhut2017low} extend the IHT algorithm~\cite{blumensath2009iterative} to the tensor framework and propose the TIHT algorithm as an alternative to the tensor nuclear norm minimization. In particular, to recover $\Xs$, the TIHT algorithm consists of the following two steps at iteration $t$:
\begin{align}
\widetilde{\X}^t &= \X^t + \mu \AA^*(\y-\AA(\X^t)), \label{eqn:tiht1} \\
\X^{t+1} &= \mathcal{H}_{\bm{r}}(\widetilde{\X}^t). \label{eqn:tiht2}
\end{align}
Here, $\mu$ is the stepsize and $\AA^*:\RRR^m \rightarrow \RRR^{n_1\times n_2\times\ldots\times n_d}$ is the adjoint operator of $\AA$. That is for tensor $\X$ and vector $\y$, $\langle \AA (\X), \y \rangle = \langle \X, \AA^*(\y)\rangle$.
$\mathcal{H}_{\bm{r}}(\X)$ computes a rank-$\bm{r}$ approximation of a tensor $\X$ with HOSVD. Note that the second step~\eqref{eqn:tiht2} is not straightforward, and the authors in~\cite{rauhut2017low} required that the assumption
\begin{align}
\|\HH_{\r}(\Xt^t)-\Xt^t\|_F \leq \eta \|\Xt^t_{\text{best}}-\Xt^t\|_F
\label{eqn:assmp_tiht}	
\end{align}
held for all $t=1,2,\ldots,T$ with some $\eta\in[1,\infty)$. $\Xt^t_{\text{best}}$ is the best rank-$\r$ approximation of $\Xt^t$ with respect to the Tucker decomposition (given by the HOSVD), namely, $\Xt^t_{\text{best}} = \arg\min_{\text{rank}(\X)\leq \r} \|\Xt^t-\X\|_F$. We will also assume such an approximation exists in our theorem.




\section{The Proposed StoTIHT algorithm}
\label{sec:StoTIHT}



We observe that the linear measurements $\y$ can be rewritten as 
\begin{align*}
\y = \A\xs,
\end{align*}
where $\A \in \RRR^{m \times n_1 n_2 \cdots n_d}$ is a matrix with the $i$-th row being the vectorized version of $\A_i$, and $\xs$ is the vectorized version of $\Xs$. Then, we can update $\widetilde{\X}^t$ in~\eqref{eqn:tiht1} with
\begin{align}
\widetilde{\x}^t &= \x^t + \mu \A^\top(\y-\A\x^t),\label{eqn:tiht11}	
\end{align}
where $\widetilde{\x}$ is the vectorized version of $\widetilde{\X}$.

As previously stated, the stochastic variant of algorithms does not require computation of the full gradient and, thus can be much more efficient in large scale settings especially when the computation and/or storage of the full gradient is very expensive. Thus, we propose a stochastic variant of the TIHT algorithm (StoTIHT) by replacing \eqref{eqn:tiht11} with
\begin{align}
\widetilde{\x}^t &= \x^t + \frac{\mu}{Mp(i_t)} \A(i_t,:)^\top(\y_{b_{i_t}}-\A(i_t,:)\x^t),\label{eqn:tiht111}	
\end{align}
where $i_t$ is an index randomly selected from $[M]=\{1,2,\cdots,M\}$ with probability $p(i_t)$, and $\A(i_t,:)\in\RRR^{b \times n_1 n_2 \cdots n_d}$ denotes the $i_t$-th block of $\A$. This updating step is equivalent to 
\begin{equation}
\begin{aligned}
\Xt^t &= \X^t + \frac{\mu}{M p(i_t)} \frac{1}{b} \sum_{j=(i_t-1)b+1}^{i_t b}  \A_j(\y(j)-\langle \A_j,\X^t \rangle)\\
& = \X^t - \frac{\mu}{Mp(i_t)} \nabla f_{i_t}(\X^t)\label{eqn:tiht1111}	
\end{aligned}
\end{equation}
with $f_{i_t}(\X^t) \triangleq \frac{1}{2b} \sum_{j=(i_t-1)b+1}^{i_t b} ( \y(j)-\langle \A_j,\X \rangle )^2$ as in~\eqref{eqn:cost}. Based on the above analysis, we summarize the algorithm below.

\begin{algorithm}[H]
\caption{Stochastic Tensor Iterative Hard Thresholding (StoTIHT)}\label{alg:StoTIHT}
\begin{algorithmic}[1]
\State\textbf{Input:} $\r$, $\mu$, and $p(i)$.
\State\textbf{Output:} $\Xh=\X^{T}$.
\State\textbf{Initialize:} $\X^0=\zero$ 
\For {$t=0,1,\ldots,T-1$}
\State Randomly select a batch index $i_t\in [M]$ with probability $p(i_t)$
\State Compute the gradient $\nabla f_{i_t}(\X^t)$ as given in~\eqref{eqn:tiht1111}
\State $\Xt^t = \X^t - \frac{\mu}{Mp(i_t)} \nabla f_{i_t}(\X^t)$
\State $\X^{t+1} = \mathcal{H}_{\bm{r}}(\widetilde{\X}^t)$
\State If the stopping criteria are met, exit.
\EndFor
\end{algorithmic}
\end{algorithm}


\section{Linear Convergence for StoTIHT}
\label{sec:linCon}

We present a linear convergence analysis for the proposed StoTIHT algorithm in this section. We first introduce the tensor restricted isometry property (TRIP) in the following definition.


\begin{Definition}\label{def:TRIP}
(TRIP)~\cite{rauhut2017low} 
Let $\AA: \RRR^{n_1\times n_2\times\cdots\times n_d} \rightarrow \RRR^m$ and $\AA_{b_i}: \RRR^{n_1\times n_2\times\cdots\times n_d} \rightarrow \RRR^b$ be the two linear operators defined in Section~\ref{sec:prob}. For a fixed tensor Tucker decomposition and a corresponding Tucker rank $\r$, we say $\AA$ and $\AA_{b_i}$ satisfy the TRIP if there exists a tensor restricted isometry constant $\delta_{\r}$ such that 
\begin{align}
\frac 1 m \|\AA(\X)\|_2^2 &\geq (1-\delta_{\r})\|\X\|_F^2 \label{eqn:trip1}\\
\frac 1 b \|\AA_{b_i}(\X)\|_2^2 &\leq (1+\delta_{\r})\|\X\|_F^2	\label{eqn:trip2}
\end{align}
hold for all tensors $\X\in \RRR^{n_1\times n_2\times\cdots\times n_d}$ of Tucker-rank at most $\r$.	
\end{Definition}

Note that~\eqref{eqn:trip2} is stronger than $\frac 1 m \|\AA(\X)\|_2^2 \leq (1+\delta_{\r})\|\X\|_F^2$, but we will need~\eqref{eqn:trip2} in the proof of the main theorem. We call the linear operator $\AA$ defined in Section~\ref{sec:prob} a random Gaussian linear operator if the entries of all of the sensing tensors $\A_i$ are random Gaussian variables. It is shown in~\cite{rauhut2017low} that the random Gaussian linear operators $\AA$ and $\AA_{b_i}$ satisfy the TRIP with high probability as long as $m\geq C\delta_{\r}^{-2}(r^d+dnr)$ and $b\geq C\delta_{\r}^{-2}(r^d+dnr)$\footnote{These $C$'s are not necessarily the same constant, according to~\cite{rauhut2017low}.} with $n = \max \{n_i, i\in[d] \}$ and $r = \max\{r_t: t\in T_I \}$. 
Here, $T_I$ is corresponding partition tree of the vertices $\alpha \subset [d] \triangleq \{1,\cdots,d\}$. See~\cite{rauhut2017low} for a more detailed description of the construction of $T_I$.


Now, we are in the position to state our main results in the following theorem.

\begin{Theorem}\label{thm:convergence}
Assume that the operators $\AA: \RRR^{n_1\times n_2\times\cdots\times n_d} \rightarrow \RRR^m$ and $\AA_{b_i}: \RRR^{n_1\times n_2\times\cdots\times n_d} \rightarrow \RRR^b$ used to obtain the linear measurements $\y$ satisfy the TRIP defined in Definition~\ref{def:TRIP}. 
Let $\Xs$ be a feasible solution of the optimization problem~\eqref{eqn:mainopt}. Denote $\X^0$ as the initial tensor. We also assume that the Tucker-rank-$\r$ approximation operator $\HH_{\r}(\cdot)$ satisfies~\eqref{eqn:assmp_tiht} for all $t=0,1,\ldots,T-1$ with some $\eta\in[1,\infty)$. Using Algorithm~\ref{alg:StoTIHT}, one can guarantee that the expectation of the recovery error is bounded by
\begin{align*}
\EEE_{I_t} \|\X^{t+1}-\Xs\|_F \leq \kappa^{t+1} 	\|\X^0-\Xs \|_F + \sigma_{\Xs}	
\end{align*}
at the $t$-th iteration. $I_t = \{i_1,i_2,\ldots,i_t\}$ denotes the set containing all indices $i_1,i_2,\ldots,i_t$ randomly selected at and before iteration $t$.  Here, $\kappa$ and $\sigma_{\Xs}$ are the contraction coefficient and tolerance parameter, which are defined as 
\begin{align*}
\kappa &\triangleq 2\sqrt{1-(2-\mu\alpha_{3\r})\mu \rho_{3\r}^-}+ \sqrt{\eta^2-1} \sqrt{1+\mu^2 \alpha_{3\r} \rho_{3\r}^+ - 2\mu \rho_{3\r}^-}	\\
\sigma_{\Xs} & \triangleq \frac{\mu}{M\min_{i\in[M]} p(i)}\left(2\EEE_{i_t}\left\|  \PP_{U^t}(\nabla f_{i_t}(\Xs)) \right\|_F+ \sqrt{\eta^2-1}\EEE_{i_t }\left\|\nabla f_{i_t}(\Xs) \right\|_F \right)
\end{align*}
with
$ \rho_{\r}^+ \triangleq 2(1+\delta_{\r}), ~ \rho_{\r}^- \triangleq 1-\delta_{\r},~
\alpha_{\r} \triangleq \max_i \frac{\rho_{\r}^+}{Mp(i)},
$
and $i_t$ being an index selected from $[M]$ with probability $p(i_t)$. $U^t$ is defined as a subspace of $\RRR^{n_1\times n_2\times\ldots\times n_d}$ spanned by $\Xs,~\X^t$, and $\X^{t+1}$. $\PP_{U^t}:\RRR^{n_1\times n_2\times\ldots\times n_d} \rightarrow U^t$ is then defined as the orthogonal projection onto $U^t$.
\end{Theorem}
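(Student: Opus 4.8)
The plan is to establish a one-step expected contraction of the form $\EEE_{i_t}\|\X^{t+1}-\Xs\|_F \le \kappa\|\X^t-\Xs\|_F + \sigma_{\Xs}$ conditioned on the current iterate $\X^t$, and then unroll this recursion over $t$ using the tower property of conditional expectation. The engine of the one-step bound is a purely deterministic inequality that isolates the effect of the inexact hard-thresholding step, followed by probabilistic estimates that tame the stochastic gradient $\nabla f_{i_t}$ through the TRIP. Throughout I would exploit that the subspace $U^t = \operatorname{span}\{\Xs,\X^t,\X^{t+1}\}$ is spanned by three tensors of Tucker-rank at most $\r$, so every tensor appearing in the argument that lives in $U^t$ (in particular $\X^t-\Xs$ and $\PP_{U^t}(\Xt^t-\Xs)$) has rank at most $3\r$; this is exactly why the TRIP constants are evaluated at level $3\r$.

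First I would prove the deterministic step. Since $\Xs$ is feasible for~\eqref{eqn:mainopt}, the best rank-$\r$ approximation satisfies $\|\Xt^t_{\text{best}}-\Xt^t\|_F \le \|\Xs-\Xt^t\|_F$, so assumption~\eqref{eqn:assmp_tiht} gives $\|\X^{t+1}-\Xt^t\|_F \le \eta\|\Xs-\Xt^t\|_F$. Because $\X^{t+1},\Xs\in U^t$, I would write $\X^{t+1}-\Xs = \PP_{U^t}(\X^{t+1}-\Xt^t)+\PP_{U^t}(\Xt^t-\Xs)$ and decompose $\X^{t+1}-\Xt^t$ and $\Xs-\Xt^t$ into their $U^t$ and $(U^t)^\perp$ parts; since both share the identical orthogonal component $-\PP_{(U^t)^\perp}\Xt^t$ (as $\PP_{(U^t)^\perp}\Xs=\PP_{(U^t)^\perp}\X^{t+1}=0$), the Pythagorean identity turns the inequality above into $\|\PP_{U^t}(\X^{t+1}-\Xt^t)\|_F^2 \le \|\PP_{U^t}(\Xt^t-\Xs)\|_F^2 + (\eta^2-1)\|\Xt^t-\Xs\|_F^2$. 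A triangle inequality followed by $\sqrt{a+c}\le\sqrt a+\sqrt c$ then yields the key deterministic bound
\[ \|\X^{t+1}-\Xs\|_F \le 2\,\|\PP_{U^t}(\Xt^t-\Xs)\|_F + \sqrt{\eta^2-1}\,\|\Xt^t-\Xs\|_F, \]
which cleanly separates a projected contraction term (carrying the factor $2$) from a full-norm term (carrying the $\sqrt{\eta^2-1}$ inexactness penalty).

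Next I would take the expectation over the random block index $i_t$ and bound each term. Substituting $\Xt^t-\Xs = (\X^t-\Xs) - \tfrac{\mu}{Mp(i_t)}\nabla f_{i_t}(\X^t)$ and adding and subtracting $\nabla f_{i_t}(\Xs)$, the unbiasedness $\EEE_{i_t}[\tfrac{1}{Mp(i_t)}\nabla f_{i_t}(\cdot)] = \nabla F(\cdot)$ converts the cross term into $\langle\X^t-\Xs,\nabla F(\X^t)-\nabla F(\Xs)\rangle = \tfrac1m\|\AA(\X^t-\Xs)\|_2^2$, which the lower TRIP~\eqref{eqn:trip1} lower-bounds by $\rho_{3\r}^-\|\X^t-\Xs\|_F^2$. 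For the quadratic term I would use the upper TRIP~\eqref{eqn:trip2} together with the importance weights controlled through $\alpha_{3\r}=\max_i \rho_{3\r}^+/(Mp(i))$; crucially, in the projected piece one can relate $\|\PP_{U^t}(\nabla f_{i_t}(\X^t)-\nabla f_{i_t}(\Xs))\|_F^2$ back to the inner product via restricted co-coercivity, producing the tighter coefficient and hence $\EEE_{i_t}\|\PP_{U^t}(\cdots)\|_F^2 \le (1-(2-\mu\alpha_{3\r})\mu\rho_{3\r}^-)\|\X^t-\Xs\|_F^2$, whereas the full piece only admits the upper constant $\rho_{3\r}^+$, giving the factor $1+\mu^2\alpha_{3\r}\rho_{3\r}^+-2\mu\rho_{3\r}^-$. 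Applying Jensen's inequality $\EEE\|\cdot\|_F\le\sqrt{\EEE\|\cdot\|_F^2}$ and collecting the residual terms $\tfrac{\mu}{Mp(i_t)}\nabla f_{i_t}(\Xs)$ (bounded using $\tfrac{1}{Mp(i_t)}\le\tfrac1{M\min_i p(i)}$) produces exactly the stated $\kappa$ and $\sigma_{\Xs}$.

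Finally, I would take the total expectation over all selected indices, use the tower property to chain the one-step estimate, and unroll the resulting recursion to obtain $\kappa^{t+1}\|\X^0-\Xs\|_F$ plus a geometric series $\sum_{k\ge 0}\kappa^k\sigma_{\Xs}\le\sigma_{\Xs}/(1-\kappa)$ in the noise term (so that, for $\kappa<1$, the accumulated tolerance is read as absorbed into $\sigma_{\Xs}$). The main obstacle I anticipate is the third step: correctly propagating the importance-sampling weights $1/(Mp(i_t))$ through the expectation and establishing the restricted co-coercivity estimate that distinguishes the $\rho_{3\r}^-$ coefficient of the projected term from the $\rho_{3\r}^+$ coefficient of the full term. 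Getting these exactly right, rather than collapsing them to a crude common $\rho_{3\r}^+$, is what makes $\kappa$ sharp, and it is also where the observation that every intermediate tensor has Tucker-rank at most $3\r$ must be invoked with care so that the TRIP is legitimately applicable to each quantity.
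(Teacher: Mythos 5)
Your proposal is correct and follows essentially the same route as the paper: feasibility of $\Xs$ together with~\eqref{eqn:assmp_tiht} yields the deterministic bound $\|\X^{t+1}-\Xs\|_F \leq 2\|\PP_{U^t}(\Xt^t-\Xs)\|_F + \sqrt{\eta^2-1}\,\|\Xt^t-\Xs\|_F$, and the expected-contraction estimates you outline (unbiasedness of the weighted stochastic gradient, the lower TRIP for the cross term, restricted co-coercivity for the projected piece versus the cruder $\rho_{3\r}^+$ bound for the unprojected piece, then Jensen) are precisely Lemmas~\ref{lem:key1}--\ref{lem:key1_1} applied at rank $3\r$. The only cosmetic differences are that you reach the deterministic bound via a Pythagorean decomposition along $U^t$ whereas the paper expands $\|\X^{t+1}-\Xt^t\|_F^2$ and solves the quadratic $x^2 \leq 2ux+v$, and your unrolling honestly retains the geometric series $\sigma_{\Xs}\sum_k \kappa^k \leq \sigma_{\Xs}/(1-\kappa)$ where the paper's final statement absorbs that factor into $\sigma_{\Xs}$.
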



The above theorem indicates that the proposed StoTIHT algorithm is guaranteed to linearly converge to the true solution even when the full gradient computation is not available. 
Therefore, the proposed StoTIHT algorithm has a significant computational advantage in large-scale settings where the number of measurements $m$ is very large and the computation of the full gradient can be very expensive, since it only needs to compute a partial gradient.

We adapt the proofs of StoIHT in~\cite{nguyen2017linear} and TIHT in~\cite{rauhut2017low} to the show the linear convergence of our proposed StoTIHT algorithm. Note that in what follows ``rank" denotes Tucker rank. We first present a key lemma that will be used in the proof of Theorem~\ref{thm:convergence}.

\begin{Lemma} \label{lem:key1}
Denote $i$ as the index randomly selected from $[M]$ with probability $p(i)$. For any fixed low-rank tensors $\X'$ and $\X$, let $U$ be a space that contains the space spanned by $\X'$ and $\X$. Assume that the rank of any tensor in $U$ is at most $\r$. Then, we have 	
\begin{align}
&\EEE_{i}\left\| \X' -\X - \frac{\mu}{Mp(i)} \PP_{U}(\nabla f_{i}(\X')-\nabla f_{i}(\X))  \right\|_F \leq \sqrt{1-(2-\mu\alpha_{\r})\mu \rho_{\r}^-}\|\X' -\X \|_F, \label{eqn:keylem11}	\\
&\EEE_{i}\left\| \X' -\X - \frac{\mu}{Mp(i)}(\nabla f_{i}(\X')-\nabla f_{i}(\X))  \right\|_F \leq \sqrt{1+\mu^2 \alpha_{\r} \rho_{\r}^+ - 2\mu \rho_{\r}^-} \|\X' -\X \|_F. \label{eqn:keylem12}
\end{align}
\end{Lemma}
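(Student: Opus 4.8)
The plan is to derive both inequalities from a single template: bound the expected norm by the square root of the expected squared norm (Jensen), expand the square into the residual term, a cross term, and a quadratic term, and then control these using the two TRIP estimates together with the observation that the importance-sampling weight $\frac{1}{Mp(i)}$ turns $\frac{1}{Mp(i)}\nabla f_i$ into an unbiased estimator of the full gradient. Throughout I would write $\g_i \triangleq \nabla f_i(\X')-\nabla f_i(\X) = \frac{1}{b}\AA_{b_i}^*\AA_{b_i}(\X'-\X)$ and record the identity $\frac{1}{M}\sum_{i=1}^M \g_i = \nabla F(\X')-\nabla F(\X) = \frac{1}{m}\AA^*\AA(\X'-\X)$, which holds because the blocks partition the rows (so $\sum_i \AA_{b_i}^*\AA_{b_i} = \AA^*\AA$ and $m=Mb$); this is where the block operators $\AA_{b_i}$ and the full operator $\AA$ become linked. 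Taking $\EEE_i$ of any term carrying a single factor $\frac{1}{Mp(i)}$ cancels $p(i)$ and produces $\frac{1}{M}\sum_i(\cdot)$, so the cross term becomes $\frac{2\mu}{M}\sum_i\langle \X'-\X,\g_i\rangle = \frac{2\mu}{m}\|\AA(\X'-\X)\|_2^2 \geq 2\mu\rho_{\r}^- \|\X'-\X\|_F^2$ by the TRIP lower bound \eqref{eqn:trip1}; this step is common to both inequalities, using only that $\X'-\X\in U$ for the identity $\langle \X'-\X,\PP_U(\g_i)\rangle=\langle \X'-\X,\g_i\rangle$.

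For \eqref{eqn:keylem11} the crucial point is that $\PP_U(\g_i)$ lies in $U$ and therefore has Tucker rank at most $\r$. Writing $\|\PP_U(\g_i)\|_F^2 = \langle \PP_U(\g_i),\g_i\rangle = \frac{1}{b}\langle \AA_{b_i}\PP_U(\g_i),\AA_{b_i}(\X'-\X)\rangle$ and applying Cauchy--Schwarz together with the TRIP upper bound \eqref{eqn:trip2} to both rank-$\r$ arguments yields a projected co-coercivity estimate $\|\PP_U(\g_i)\|_F^2 \leq \rho_{\r}^+\cdot \frac{1}{b}\|\AA_{b_i}(\X'-\X)\|_2^2$. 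Bounding $\frac{1}{Mp(i)}\leq \alpha_{\r}/\rho_{\r}^+$ in the quadratic term and summing through the partition identity $\frac{1}{M}\sum_i \frac{1}{b}\|\AA_{b_i}(\X'-\X)\|_2^2 = \frac{1}{m}\|\AA(\X'-\X)\|_2^2$ converts it into $\mu^2\alpha_{\r}\cdot\frac{1}{m}\|\AA(\X'-\X)\|_2^2$. Collecting the three contributions gives $\|\X'-\X\|_F^2 - (2-\mu\alpha_{\r})\mu\cdot\frac{1}{m}\|\AA(\X'-\X)\|_2^2$, and a final use of \eqref{eqn:trip1} (legitimate since $2-\mu\alpha_{\r}\geq 0$ under the standing step-size restriction $\mu\alpha_{\r}\leq 2$) produces $1-(2-\mu\alpha_{\r})\mu\rho_{\r}^-$; taking square roots closes \eqref{eqn:keylem11}.

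For \eqref{eqn:keylem12} I would run the identical expansion with the unprojected $\g_i$: the residual and cross terms are handled exactly as above, leaving the quadratic contribution $\mu^2\cdot\frac{1}{M}\sum_i \frac{1}{Mp(i)}\|\g_i\|_F^2$, and after $\frac{1}{Mp(i)}\leq\alpha_{\r}/\rho_{\r}^+$ the whole task reduces to showing $\frac{1}{M}\sum_i\|\g_i\|_F^2 \leq (\rho_{\r}^+)^2\|\X'-\X\|_F^2$, which yields the claimed $\mu^2\alpha_{\r}\rho_{\r}^+$. This is the step I expect to be the main obstacle: unlike $\PP_U(\g_i)$, the full difference $\g_i=\frac{1}{b}\AA_{b_i}^*\AA_{b_i}(\X'-\X)$ generally has large Tucker rank, so \eqref{eqn:trip2} cannot be applied to it verbatim, and the naive bound through $\|\AA_{b_i}^*\|$ is not controlled by the TRIP. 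The way I would resolve it is to invoke the per-block restricted smoothness of $f_i$ on the rank-$\leq\r$ model that is actually relevant in the main proof (the component retained by the subsequent $\HH_{\r}$ step), giving $\|\g_i\|_F^2 \leq \rho_{\r}^+\cdot\frac{1}{b}\|\AA_{b_i}(\X'-\X)\|_2^2$ again by Cauchy--Schwarz and \eqref{eqn:trip2}; summing via the partition identity and then using the full-operator upper bound $\frac{1}{m}\|\AA(\X'-\X)\|_2^2\leq(1+\delta_{\r})\|\X'-\X\|_F^2$ (noted in the remark after Definition~\ref{def:TRIP} to follow from \eqref{eqn:trip2}) gives $\frac{1}{M}\sum_i\|\g_i\|_F^2\leq\rho_{\r}^+(1+\delta_{\r})\|\X'-\X\|_F^2\leq(\rho_{\r}^+)^2\|\X'-\X\|_F^2$, with room to spare. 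The definitions $\rho_{\r}^+=2(1+\delta_{\r})$ and $\alpha_{\r}=\max_i \rho_{\r}^+/(Mp(i))$ are calibrated precisely so that this block smoothness constant and the importance-sampling reweighting combine into the single coefficient on the right-hand side.
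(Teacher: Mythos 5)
Your proof of \eqref{eqn:keylem11} is correct, and it takes a genuinely shorter route than the paper. The paper reaches the projected co-coercivity bound through two auxiliary lemmas: restricted smoothness of $f_i$ (Lemma~\ref{lem:key1_2}, via the TRIP and a cited convexity lemma from~\cite{zhou2018fenchel}) and then a descent-lemma argument with the auxiliary function $G_i(\W) = f_i(\W) - \langle \nabla f_i(\X),\W\rangle$ (Lemma~\ref{lem:key1_1}). You get the same inequality in one line: since $\|\PP_U(\g_i)\|_F^2 = \tfrac{1}{b}\langle \AA_{b_i}(\PP_U(\g_i)), \AA_{b_i}(\X'-\X)\rangle$ and $\PP_U(\g_i)\in U$ has rank at most $\r$, Cauchy--Schwarz and \eqref{eqn:trip2} give $\|\PP_U(\g_i)\|_F^2 \leq (1+\delta_{\r})\,\tfrac{1}{b}\|\AA_{b_i}(\X'-\X)\|_2^2 = (1+\delta_{\r})\langle \X'-\X, \g_i\rangle$, which is the paper's co-coercivity \eqref{eqn:coco} with an even better constant ($1+\delta_{\r}$ instead of $\rho_{\r}^+ = 2(1+\delta_{\r})$). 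The bookkeeping that follows (unbiasedness of the weighted gradient, $\tfrac{1}{Mp(i)} \leq \alpha_{\r}/\rho_{\r}^+$, the partition identity, the sign condition $2-\mu\alpha_{\r}\geq 0$, Jensen) matches the paper's computation exactly.

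The proof of \eqref{eqn:keylem12}, however, has a genuine gap, and it sits exactly where you predicted: bounding the \emph{unprojected} quantity $\|\g_i\|_F = \|\nabla f_i(\X')-\nabla f_i(\X)\|_F$. Your proposed fix --- restricting attention to ``the rank-$\leq\r$ component retained by the subsequent $\HH_{\r}$ step'' --- does not work, for two reasons. First, it does not prove the lemma as stated: \eqref{eqn:keylem12} bounds a full-space Frobenius norm, and $\g_i = \tfrac{1}{b}\AA_{b_i}^*\AA_{b_i}(\X'-\X)$ generically has full Tucker rank, so \eqref{eqn:trip2} cannot be applied to it and your Cauchy--Schwarz step $\|\g_i\|_F^2 \leq \rho_{\r}^+\cdot\tfrac{1}{b}\|\AA_{b_i}(\X'-\X)\|_2^2$ is unsupported. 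Second, no projection is available downstream either: in the proof of Theorem~\ref{thm:convergence}, \eqref{eqn:keylem12} is applied to $\bigl\|\X^t-\Xs-\tfrac{\mu}{Mp(i_t)}\nabla f_{i_t}(\X^t)\bigr\|_F$, which enters through the quasi-optimality term $(\eta^2-1)\|\Xt^t-\Xs\|_F^2$, and $\Xt^t$ is the \emph{pre-thresholding} iterate with no low-rank structure; the hard-thresholded iterate $\X^{t+1}$ appears only in the other terms. The paper closes this step differently: it proves the full-norm Lipschitz bound \eqref{eqn:key1_2_2}, $\|\nabla f_i(\X')-\nabla f_i(\X)\|_F \leq \rho_{\r}^+\|\X'-\X\|_F$, by combining the restricted smoothness estimate \eqref{eqn:zhou2} with the \emph{global} convexity of the quadratic $f_i$ and Lemma~4 of~\cite{zhou2018fenchel} --- global convexity is the ingredient your argument is missing, since restricted bounds alone do not control the action of $\AA_{b_i}^*\AA_{b_i}$ on directions outside the low-rank model. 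To repair your write-up, replace your per-block step with an appeal to \eqref{eqn:key1_2_2} (or reproduce its convexity-based derivation); the rest of your expansion, including $\EEE_i \tfrac{\mu^2}{(Mp(i))^2}(\rho_{\r}^+)^2 \leq \mu^2\alpha_{\r}\rho_{\r}^+$, then goes through as in the paper.
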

To prove Lemma~\ref{lem:key1}, we need the following two lemmas.

\begin{Lemma}\label{lem:key1_2}
For any two low-rank tensors $\X'$ and $\X$, let $U$ be a space that contains the space spanned by $\X'$ and $\X$. Assume that the rank of any tensor in $U$ is at most $\r$. Suppose that the two linear operators $\AA$ and $\AA_{b_i}$ satisfy the TRIP defined in Definition~\ref{def:TRIP}. Then, the functions $F(\X)$ and $f_i(\X)$	defined in~\eqref{eqn:cost} satisfy
\begin{align}
&\rho_{\r}^-\|\X'-\X\|_F^2 \leq \langle \X'-\X, \nabla F(\X') - \nabla F(\X) \rangle, \label{eqn:key1_2_1}\\
&\|\nabla f_i(\X') - \nabla f_i(\X)\|_F \leq \rho_{\r}^+\|\X'-\X\|_F \label{eqn:key1_2_2}
\end{align}
for all low-rank tensors $\X'$ and $\X$. 

\end{Lemma}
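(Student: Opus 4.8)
The plan is to reduce both inequalities to the behavior of the self-adjoint operators $\tfrac1m\AA^*\AA$ and $\tfrac1b\AA_{b_i}^*\AA_{b_i}$ on low-rank tensors. Since $F$ and $f_i$ are quadratic in the residual, I would first record that $\nabla F(\X)=\tfrac1m\AA^*(\AA(\X)-\y)$ and $\nabla f_i(\X)=\tfrac1b\AA_{b_i}^*(\AA_{b_i}(\X)-\y_{b_i})$, so the two gradient differences are linear in $\Z\assign\X'-\X$, namely $\nabla F(\X')-\nabla F(\X)=\tfrac1m\AA^*\AA(\Z)$ and $\nabla f_i(\X')-\nabla f_i(\X)=\tfrac1b\AA_{b_i}^*\AA_{b_i}(\Z)$. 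Because $\X'$ and $\X$ both lie in $U$ and every tensor in $U$ has rank at most $\r$, the difference $\Z$ is again a tensor of rank at most $\r$, so both TRIP estimates of Definition~\ref{def:TRIP} apply to $\Z$.

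For~\eqref{eqn:key1_2_1} I would expand the inner product with the adjoint identity, $\langle\Z,\nabla F(\X')-\nabla F(\X)\rangle=\tfrac1m\langle\Z,\AA^*\AA(\Z)\rangle=\tfrac1m\|\AA(\Z)\|_2^2$, and then apply the lower TRIP bound~\eqref{eqn:trip1} to the rank-$\le\r$ tensor $\Z$ to obtain $\tfrac1m\|\AA(\Z)\|_2^2\ge(1-\delta_{\r})\|\Z\|_F^2=\rho_{\r}^-\|\Z\|_F^2$. This is the desired restricted-strong-convexity inequality, and the direction is routine.

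For~\eqref{eqn:key1_2_2} I would use the dual description of the Frobenius norm, $\|\nabla f_i(\X')-\nabla f_i(\X)\|_F=\sup_{\|\W\|_F\le1}\tfrac1b\langle\AA_{b_i}(\W),\AA_{b_i}(\Z)\rangle$, and test against $\W$ drawn from the low-rank subspace $U$, which is the regime in which this estimate is actually invoked (through $\PP_U$) in Lemma~\ref{lem:key1}. For such $\W$ I would either combine Cauchy--Schwarz with the upper TRIP bound~\eqref{eqn:trip2} applied to $\AA_{b_i}(\W)$ and $\AA_{b_i}(\Z)$ separately, or use the polarization identity $\tfrac1b\langle\AA_{b_i}(\W),\AA_{b_i}(\Z)\rangle=\tfrac1{4b}\|\AA_{b_i}(\W+\Z)\|_2^2-\tfrac1{4b}\|\AA_{b_i}(\W-\Z)\|_2^2$, discard the nonnegative second term, and bound the first by~\eqref{eqn:trip2} applied to the rank-$\le\r$ tensor $\W+\Z$; collecting the constants then yields the factor $\rho_{\r}^+=2(1+\delta_{\r})$.

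The hard part will be the control of the gradient difference in the ambient Frobenius norm of~\eqref{eqn:key1_2_2}. The TRIP constrains $\AA_{b_i}$ only on rank-$\le\r$ tensors, whereas the gradient difference $\tfrac1b\AA_{b_i}^*\AA_{b_i}(\Z)=\tfrac1b\sum_{j}\langle\A_j,\Z\rangle\A_j$ need not be low rank, so the supremum over all unit witnesses $\W$ cannot be bounded by the TRIP alone. My resolution is to keep the witness confined to the low-rank subspace carrying $\X'$, $\X$ and the relevant projections, which makes the reduction to~\eqref{eqn:trip2} legitimate and supplies exactly the quantity needed for the contraction in Lemma~\ref{lem:key1}; the deliberately loose constant $2(1+\delta_{\r})$ in $\rho_{\r}^+$ then leaves room for the polarization relaxation. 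I would accordingly phrase and use~\eqref{eqn:key1_2_2} against low-rank test tensors so that the passage to~\eqref{eqn:trip2} is transparent.
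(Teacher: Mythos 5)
Your handling of \eqref{eqn:key1_2_1} coincides with the paper's proof (expand the inner product into $\tfrac1m\|\AA(\X'-\X)\|_2^2$ and apply \eqref{eqn:trip1} to the rank-$\le\r$ difference), so that part is fine. The gap is in your treatment of \eqref{eqn:key1_2_2}. You correctly observe that the TRIP cannot control $\sup_{\|\W\|_F\le1}\tfrac1b\langle\AA_{b_i}(\W),\AA_{b_i}(\X'-\X)\rangle$ over \emph{arbitrary} unit witnesses $\W$ --- this is a genuinely sharp diagnosis --- but your resolution, confining the witness to the low-rank subspace $U$, proves only the projected inequality $\|\PP_U(\nabla f_i(\X')-\nabla f_i(\X))\|_F\le\rho_{\r}^+\|\X'-\X\|_F$, which is strictly weaker than the lemma as stated. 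Your claim that this is ``the regime in which this estimate is actually invoked (through $\PP_U$) in Lemma~\ref{lem:key1}'' is true only of \eqref{eqn:keylem11}. The second bound \eqref{eqn:keylem12} expands the square and controls $\EEE_i\tfrac{\mu^2}{(Mp(i))^2}\|\nabla f_i(\X')-\nabla f_i(\X)\|_F^2$ using \eqref{eqn:key1_2_2} in the \emph{ambient} Frobenius norm, with no projection; and in the proof of Theorem~\ref{thm:convergence} the terms carrying $\sqrt{\eta^2-1}$ involve $\bigl\|\X^t-\Xs-\tfrac{\mu}{Mp(i_t)}\nabla f_{i_t}(\X^t)\bigr\|_F$, which originates from $\|\Xt^t-\Xs\|_F$ where $\Xt^t$ is the pre-thresholding gradient iterate. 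Since $\Xt^t$ is generically full rank and lies in no fixed low-rank subspace, no projection can be inserted there, so your weakened version of \eqref{eqn:key1_2_2} cannot feed the downstream argument; as proposed, the plan proves a different lemma than the one the paper states and uses.

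For contrast, the paper's own route is different from yours: it writes $\tfrac1b\|\AA_{b_i}(\X'-\X)\|_2^2=\langle\nabla f_i(\X')-\nabla f_i(\X),\X'-\X\rangle$ as the sum of two Bregman divergences, uses a without-loss-of-generality step plus \eqref{eqn:trip2} to obtain the restricted quadratic upper bound \eqref{eqn:zhou2}, and then invokes convexity of $f_i$ together with Lemma 4 of \cite{zhou2018fenchel} to pass from that upper bound to the gradient-Lipschitz estimate with $\rho_{\r}^+=2(1+\delta_{\r})$. In other words, the paper externalizes to a cited lemma exactly the step you identified as hard; note that the cited implication (quadratic upper bound plus convexity implies a Lipschitz gradient) is a global statement, while \eqref{eqn:zhou2} is established only for low-rank pairs, so your underlying concern about the ambient norm is legitimate and is resolved in the paper by citation rather than by restricting witnesses. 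Your Cauchy--Schwarz/polarization computation for the projected version is itself correct --- with $\W$ and $\X'-\X$ in $U$, the sums stay of rank at most $\r$ and one even obtains the constant $1+\delta_{\r}\le\rho_{\r}^+$ --- but to match the paper you would need either the unprojected bound or a restructuring of Lemma~\ref{lem:key1} and of the $\sqrt{\eta^2-1}$ terms in the main theorem.
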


\begin{proof}
Recall that 
\begin{align*}
F(\X) & = \frac {1}{2m} \sum_{i=1}^m (\y(i)-\langle \A_i,\X \rangle)^2,\\
f_i(\X) & = \frac{1}{2b} \sum_{j=(i-1)b+1}^{ib} ( \y(j)-\langle \A_j,\X \rangle )^2.	
\end{align*}
With some fundamental calculations, we obtain their gradients
\begin{align*}
\nabla F(\X) &= \frac 1 m \sum_{i=1}^m \A_i(\langle \A_i,\X \rangle-\y(i)),\\
\nabla f_i(\X) & = \frac 1 b \sum_{j=(i-1)b+1}^{ib} \A_j(\langle \A_j,\X \rangle-\y(j)). 	 
\end{align*}
 
It follows from the TRIP~\eqref{eqn:trip1} that
\begin{align*}
\langle \X'-\X, \nabla F(\X') - \nabla F(\X) \rangle &=\frac 1 m\left\langle \X'-\X,\sum_{i=1}^m \A_i \langle \A_i,\X'-\X \rangle   \right\rangle \\
& = \frac 1 m \sum_{i=1}^m \langle \A_i,\X'-\X \rangle^2\\
& = \frac 1 m \|\AA(\X'-\X)\|_2^2\\
&\geq (1-\delta_{\r})\|\X'-\X\|_F^2.
\end{align*}	
Thus, we finish the proof of~\eqref{eqn:key1_2_1} by setting $\rho_{\r}^- = 1-\delta_{\r}$.

Note that
\begin{align*}
\frac 1 b \|\AA_{b_i}(\X'-\X)\|_2^2 &= \frac 1 b \sum_{j=(i-1)b+1}^{ib} \langle \A_j,\X'-\X \rangle^2 = \frac 1 b \left\langle  \sum_{j=(i-1)b+1}^{ib} \A_j \langle \A_j,\X'-\X \rangle,\X'-\X  \right\rangle\\
& = \langle \nabla f_i(\X') - \nabla f_i (\X),\X'-\X \rangle\\
& = (f_i(\X')-f_i(\X)- \langle \nabla f_i(\X),\X'-\X \rangle )+(f_i(\X)-f_i(\X')- \langle \nabla f_i(\X'),\X-\X' \rangle ).
\end{align*}
Without loss of generality, we assume that 
\begin{align*}
f_i(\X')-f_i(\X)- \langle \nabla f_i(\X),\X'-\X \rangle \leq f_i(\X)-f_i(\X')- \langle \nabla f_i(\X'),\X-\X' \rangle. 	
\end{align*}
Then, it follows from the TRIP~\eqref{eqn:trip2} that
\begin{align}
f_i(\X')-f_i(\X)- \langle \nabla f_i(\X),\X'-\X \rangle \leq (1+\delta_{\r}) \|\X'-\X\|_F^2.
\label{eqn:zhou2}
\end{align}
Note that $f_i(\X)$ is a convex function with respect to $\X$, together with Lemma 4 in~\cite{zhou2018fenchel}, we then get~\eqref{eqn:key1_2_2} by setting $\rho_{\r}^+ = 2(1+\delta_{\r})$.

\end{proof}

\begin{Lemma}\label{lem:key1_1}
(Co-coercivity) For any two low-rank tensors $\X'$ and $\X$, let $U$ be a space that contains the space spanned by $\X'$ and $\X$. Assume that the rank of any tensor in $U$ is at most $\r$. Then, we have
\begin{align}
\|\PP_U(\nabla f_i(\X')-\nabla f_i(\X))\|_F^2 \leq \rho_{\r}^+ \langle \X'-\X, \nabla f_i(\X') - \nabla f_i(\X) \rangle	
\label{eqn:coco}
\end{align}
	
\end{Lemma}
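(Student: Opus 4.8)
The plan is to prove the bound directly from the TRIP upper bound~\eqref{eqn:trip2} together with Cauchy--Schwarz, rather than routing through the usual convexity/smoothness argument, because the quadratic structure of $f_i$ makes every term explicit. Write $\Delta \triangleq \X'-\X$ and $\G \triangleq \nabla f_i(\X')-\nabla f_i(\X)$. Using the gradient formula from the proof of Lemma~\ref{lem:key1_2}, $\G = \frac 1 b \sum_{j=(i-1)b+1}^{ib}\A_j\langle \A_j,\Delta\rangle$, so $\AA_{b_i}(\Delta)$ is precisely the vector with entries $\langle \A_j,\Delta\rangle$, and as already recorded in that proof $\langle \G,\Delta\rangle = \frac 1 b\|\AA_{b_i}(\Delta)\|_2^2 \ge 0$. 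This nonnegative identity is the object I will compare $\|\PP_U(\G)\|_F^2$ against.

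First I would use that $\PP_U$ is an orthogonal projection (self-adjoint and idempotent) to write $\|\PP_U(\G)\|_F^2 = \langle \G,\PP_U(\G)\rangle$, and then expand $\G$ as above to convert the right-hand side into an inner product in $\RRR^b$:
\[
\|\PP_U(\G)\|_F^2 = \frac 1 b \sum_{j=(i-1)b+1}^{ib}\langle \A_j,\Delta\rangle\,\langle \A_j,\PP_U(\G)\rangle = \frac 1 b \big\langle \AA_{b_i}(\Delta),\,\AA_{b_i}(\PP_U(\G))\big\rangle.
\]
Applying Cauchy--Schwarz in $\RRR^b$ then gives $\|\PP_U(\G)\|_F^2 \le \frac 1 b \|\AA_{b_i}(\Delta)\|_2\,\|\AA_{b_i}(\PP_U(\G))\|_2$.

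The crucial point is that $\PP_U(\G)$ lies in $U$ and therefore has rank at most $\r$ by hypothesis, so the TRIP upper bound~\eqref{eqn:trip2} may be invoked on it, yielding $\frac 1 b\|\AA_{b_i}(\PP_U(\G))\|_2^2 \le (1+\delta_{\r})\|\PP_U(\G)\|_F^2$. Substituting this and cancelling one factor of $\|\PP_U(\G)\|_F$ (the case $\PP_U(\G)=\zero$ being trivial) gives $\|\PP_U(\G)\|_F \le \sqrt{(1+\delta_{\r})/b}\,\|\AA_{b_i}(\Delta)\|_2$. Squaring and using $\frac 1 b\|\AA_{b_i}(\Delta)\|_2^2 = \langle \G,\Delta\rangle$ produces $\|\PP_U(\G)\|_F^2 \le (1+\delta_{\r})\langle \G,\Delta\rangle \le \rho_{\r}^+\langle \G,\Delta\rangle$, where the last step uses $\rho_{\r}^+ = 2(1+\delta_{\r})$ and $\langle\G,\Delta\rangle\ge 0$; this is exactly~\eqref{eqn:coco}.

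The only genuine obstacle is the presence of $\PP_U$: one must resist applying the TRIP to $\G$ itself, which is a generic superposition of sensing tensors and need not be low-rank, and instead keep the \emph{projected} gradient so that the low-rank hypothesis on $U$ legitimizes~\eqref{eqn:trip2}. Once that is recognized, the rest is bookkeeping built on the quadratic identity $\langle\G,\Delta\rangle=\frac1b\|\AA_{b_i}(\Delta)\|_2^2$ from Lemma~\ref{lem:key1_2} and a single Cauchy--Schwarz step; a more traditional route through an $L$-smoothness descent inequality restricted to $U$ is also available but is longer and gives nothing sharper here.
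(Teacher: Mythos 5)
Your proof is correct, and it takes a genuinely different route from the paper's. The paper argues via classical convex analysis: it first derives the restricted smoothness bound~\eqref{eqn:zhou2} from the TRIP (together with convexity and Lemma 4 of~\cite{zhou2018fenchel}), then introduces the auxiliary function $G_i(\W) = f_i(\W) - \langle \nabla f_i(\X),\W\rangle$, observes $G_i(\W)\geq G_i(\X)$ for all $\W$, takes a projected-gradient descent step $\W = \X' - \frac{1}{\rho_{\r}^+}\PP_U\nabla G_i(\X')$ inside $U$ to obtain the one-sided bound $\frac{1}{2\rho_{\r}^+}\|\PP_U(\nabla f_i(\X')-\nabla f_i(\X))\|_F^2 \leq f_i(\X')-f_i(\X)-\langle \nabla f_i(\X),\X'-\X\rangle$, and finally symmetrizes by summing with the roles of $\X'$ and $\X$ exchanged. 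You instead exploit the quadratic structure of $f_i$ directly: writing $\G \triangleq \nabla f_i(\X')-\nabla f_i(\X)$, you use the projection identity $\|\PP_U(\G)\|_F^2=\langle \G,\PP_U(\G)\rangle = \frac{1}{b}\langle \AA_{b_i}(\X'-\X),\AA_{b_i}(\PP_U(\G))\rangle$, one Cauchy--Schwarz step in $\RRR^b$, and the TRIP~\eqref{eqn:trip2} applied to $\PP_U(\G)$, which is legitimate precisely because $\PP_U(\G)\in U$ has rank at most $\r$ --- you correctly never apply the TRIP to $\G$ itself, which is the one place this argument could have gone wrong, and the cancellation of a factor $\|\PP_U(\G)\|_F$ is handled. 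Combined with the identity $\langle \G,\X'-\X\rangle = \frac{1}{b}\|\AA_{b_i}(\X'-\X)\|_2^2\geq 0$ already recorded in Lemma~\ref{lem:key1_2}, this closes the proof. Your route is shorter, avoids the appeal to the external convexity lemma, and is in fact sharper: you obtain the constant $1+\delta_{\r} = \rho_{\r}^+/2$, a factor of two better than~\eqref{eqn:coco}, which if propagated would slightly improve $\alpha_{\r}$ and hence the contraction factor $\kappa$ in Theorem~\ref{thm:convergence}. What the paper's approach buys in exchange is generality: the $G_i$-function argument uses only restricted smoothness and convexity of $f_i$, so it survives for non-quadratic losses, whereas your computation is tied to the least-squares structure of~\eqref{eqn:cost}.
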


\begin{proof}

Exchanging the role of $\X'$ and $\X$ in~\eqref{eqn:zhou2}, we get
\begin{align*}
f_i(\X) - 	f_i(\X') - \langle \nabla f_i(\X'),\X-\X'	\rangle \leq \frac 1 2\rho_{\r}^+ \|\X'-\X\|_F^2.
\end{align*}
Adding the above inequality and~\eqref{eqn:zhou2} together, we obtain
\begin{align*}
\langle \X'-\X,\nabla f_i(\X')-	\nabla f_i(\X)\rangle \leq \rho_{\r}^+ \|\X'-\X\|_F^2.
\end{align*}

Define a function 
\begin{align*}
G_i(\W) \triangleq f_i(\W) - 	\langle \nabla f_i(\X),\W	\rangle.
\end{align*}
Then, we have
\begin{align*}
\|\nabla G_i(\W_1) - \nabla G_i(\W_2)\|_F = \| \nabla f_i(\W_1) - \nabla f_i(\W_2) \|_F  \leq \rho_{\r}^+\|\W_1-\W_2\|_F
\end{align*}
holds for any $\W_1$ and $\W_2$ with their span belonging to $U$. This implies that we can get a similar inequality as in~\eqref{eqn:zhou2}, namely
\begin{align}
G_i(\W_1) - G_i(\W_2) - \langle \nabla G_i(\W_2),\W_1-\W_2\rangle \leq \frac 1 2\rho_{\r}^+ \|\W_1-\W_2\|_F^2.
\label{eqn:drss_G1}
\end{align}

Note that 
\begin{equation}
\begin{aligned}
G_i(\W) - G_i(\X) &= f_i(\W) - f_i(\X) - 	\langle \nabla f_i(\X),\W - \X\rangle	\\
& = \frac{1}{2b} \sum_{j=(i-1)b+1}^{ib} \langle \A_j,\W-\X\rangle^2 \geq 0  
\label{eqn:GWmX}
\end{aligned}
\end{equation}
holds for all $\W\in U$. Here, the second equality follows by plugging the expression of $f_i(\W),~f_i(\X)$, and $\nabla f_i(\X)$. Define $\W \triangleq \X' - \frac{1}{\rho_{\r}^+} \PP_U \nabla G_i(\X')$, it can be seen that $\W \in U$ since both $\X'$ and $\PP_U \nabla G_i(\X')$ belong to $U$. Then, applying~\eqref{eqn:drss_G1} and \eqref{eqn:GWmX}, we have
\begin{align*}
G_i(\X) &\leq G_i(\W) =  G_i \left( \X' - \frac{1}{\rho_{\r}^+} \PP_U \nabla G_i(\X') \right)	\\
&\leq G_i(\X') + \left\langle \nabla G_i(\X'), - \frac{1}{\rho_{\r}^+} \PP_U \nabla G_i(\X')\right\rangle + \frac{1}{2\rho_{\r}^+} \|  \PP_U \nabla G_i(\X')\|_F^2\\
&= G_i(\X')-\frac{1}{2\rho_{\r}^+} \|  \PP_U \nabla G_i(\X')\|_F^2.
\end{align*}
Plugging the definition of $G_i(\X)$ and $G_i(\X')$ into the above inequality gives
\begin{align*}
\frac{1}{2\rho_{\r}^+} \|  \PP_U \nabla G_i(\X')\|_F^2 
&= \frac{1}{2\rho_{\r}^+} \|  \PP_U ( \nabla f_i(\X') - \nabla f_i(\X) )\|_F^2  \\
&\leq G_i(\X') - G_i(\X) = 	f_i(\X')-f_i(\X) - \langle \nabla f_i(\X),\X'-\X\rangle.
\end{align*}
Finally, we can obtain~\eqref{eqn:coco} by summing the two inequalities with $\X'$ and $\X$ exchanged.  

\end{proof}

\noindent
{\em Proof of Lemma~\ref{lem:key1}.} With the above two lemmas, we obtain 
\begin{align*}
&\EEE_{i}\left\| \X' -\X - \frac{\mu}{Mp(i)} \PP_{U}(\nabla f_{i}(\X')-\nabla f_{i}(\X))  \right\|_F^2 \\
= & \|\X'-\X\|_F^2  + \EEE_i \frac{\mu^2}{(Mp(i))^2} \| \PP_{U}(\nabla f_{i}(\X')-\nabla f_{i}(\X)) \|_F^2 - 2\mu \EEE_i \left\langle \X'-\X,\frac{1}{Mp(i)}\PP_{U}(\nabla f_{i}(\X')-\nabla f_{i}(\X))  \right\rangle\\
\leq & \|\X'-\X\|_F^2  + \mu^2\EEE_i \frac{\rho_{\r}^+}{(Mp(i))^2}  \langle \X'-\X, \nabla f_i(\X') - \nabla f_i(\X) \rangle	- 2\mu \EEE_i \left\langle \X'-\X,\frac{1}{Mp(i)}(\nabla f_{i}(\X')-\nabla f_{i}(\X))  \right\rangle\\
\leq & \|\X'-\X\|_F^2  +  \left(\mu^2 \max_i \frac{\rho_{\r}^+}{Mp(i)}-2\mu\right) \EEE_i   \left\langle \X'-\X, \frac{1}{Mp(i)} (\nabla f_i(\X') - \nabla f_i(\X)) \right\rangle	\\
= & \|\X'-\X\|_F^2  -  \left(2\mu - \mu^2 \alpha_{\r}\right) \left\langle \X'-\X, \nabla F(\X') - \nabla F(\X)\right\rangle\\
\leq & \|\X'-\X\|_F^2 -  \left(2\mu -\mu^2 \alpha_{\r}\right)\rho_{\r}^-\|\X'-\X\|_F^2.  
\end{align*}
Here, the first inequality follows from~\eqref{eqn:coco} and the definition of $U$. The second equality follows from $\EEE_i \frac{1}{Mp(i)} \nabla f_i(\X) = \nabla F(\X)$ and $\alpha_{\r} \triangleq \max_i \frac{\rho_{\r}^+}{Mp(i)}$. The last inequality follows from~\eqref{eqn:key1_2_1}. Then, we complete the proof of~\eqref{eqn:keylem11} by applying the well known inequality $(\EEE Z)^2 \leq \EEE Z^2$. 

Similar to the proof of~\eqref{eqn:keylem11}, we also have
\begin{align*}
&\EEE_{i}\left\| \X' -\X - \frac{\mu}{Mp(i)} (\nabla f_{i}(\X')-\nabla f_{i}(\X))  \right\|_F^2 \\
= & \|\X'-\X\|_F^2  + \EEE_i \frac{\mu^2}{(Mp(i))^2} \| \nabla f_{i}(\X')-\nabla f_{i}(\X) \|_F^2 - 2\mu \EEE_i \left\langle \X'-\X,\frac{1}{Mp(i)} (\nabla f_{i}(\X')-\nabla f_{i}(\X))  \right\rangle\\
= & \|\X'-\X\|_F^2  + \EEE_i \frac{\mu^2}{(Mp(i))^2} \| \nabla f_{i}(\X')-\nabla f_{i}(\X) \|_F^2 - 2\mu \left\langle \X'-\X,\nabla F(\X')-\nabla F(\X) \right\rangle\\  
\leq & \|\X'-\X\|_F^2 + \EEE_i \frac{\mu^2}{(Mp(i))^2}(\rho_{\r}^+)^2\|\X'-\X\|_F^2 - 2\mu \rho_{\r}^-\|\X'-\X\|_F^2\\
\leq & (1+\mu^2 \alpha_{\r} \rho_{\r}^+-2\mu \rho_{\r}^-)\|\X'-\X\|_F^2,
\end{align*}
where the first inequality follows from~\eqref{eqn:key1_2_1} and \eqref{eqn:key1_2_2}, and the last inequality follows from 
\begin{align*}
\EEE_i \frac{\mu^2}{(Mp(i))^2}(\rho_{\r}^+)^2\leq \mu^2 \max_i \frac{\rho_{\r}^+}{Mp(i)}	\EEE_i \frac{\rho_{\r}^+}{Mp(i)} = \mu^2 \alpha_{\r} \sum_{i=1}^M \frac{\rho_{\r}^+}{Mp(i)} p(i) = \mu^2 \alpha_{\r} \rho_{\r}^+.
\end{align*}
Therefore, by applying $(\EEE Z)^2 \leq \EEE Z^2$, we finish the proof of~\eqref{eqn:keylem12}. 

We are now prepared to prove Theorem~\ref{thm:convergence}.
\begin{proof}
Denote $\Xt^t_{\text{best}}$ as the best rank-$\r$ approximation of $\Xt^t$. Assume that the rank-$\r$ approximation operator $\HH_{\r}(\cdot)$ satisfies
\begin{align*}
\|\HH_{\r}(\Xt^t)-\Xt^t\|_F \leq \eta \|\Xt^t_{\text{best}}-\Xt^t\|_F	
\end{align*}
for all $t=1,2,\ldots$ with some $\eta\in[1,\infty)$. Note that for the HOSVD, \cite{grasedyck2010hierarchical} demonstrated a method to compute the HOSVD which obtains $\eta = \sqrt{2d - 3 }$, as well as methods that result in $\eta = \sqrt{2d - 2}$ and $\eta = (2 + \sqrt{2})\sqrt{d}$. We refer the reader to~\cite{grasedyck2010hierarchical} for a description of these methods.
Then, we have 
\begin{align*}
\|\X^{t+1}-\Xt^t\|_F = \|\HH_{\r}(\Xt^t)-\Xt^t\|_F \leq \eta \|\Xt^t_{\text{best}}-\Xt^t\|_F \leq \eta \|\Xt^t -\Xs\|_F	.
\end{align*}
It follows that
\begin{align*}
\eta^2 \|\Xt^t-\Xs\|_F^2 &\geq \|\X^{t+1}-\Xt^t\|_F^2	= \|\X^{t+1} -\Xs + \Xs-\Xt^t\|_F^2\\
& = \|\Xt^t -\Xs \|_F^2 + \|\X^{t+1} - \Xs\|_F^2 - 2 \langle \X^{t+1} - \Xs,\Xt^t -\Xs   \rangle.
\end{align*}
Simplifying, we get
\begin{align*}
\|\X^{t+1} - \Xs\|_F^2 &\leq 	2 \langle \X^{t+1} - \Xs,\Xt^t -\Xs   \rangle + (\eta^2-1)\|\Xt^t-\Xs\|_F^2. 
\end{align*}
Plugging in the updating step~\eqref{eqn:tiht1111} gives
\begin{align*}
\|\X^{t+1} - \X\|_F^2 
&\leq 2 \left\langle \X^{t+1} - \Xs,\X^t -\Xs - \frac{\mu}{Mp(i_t)} \nabla f_{i_t}(\X^t) \right\rangle + (\eta^2-1)\left\|\X^t -\Xs - \frac{\mu}{Mp(i_t)} \nabla f_{i_t}(\X^t)\right\|_F^2\\
& = 2 \left\langle \X^{t+1} - \Xs,\X^t -\Xs - \frac{\mu}{Mp(i_t)} (\nabla f_{i_t}(\X^t)-\nabla f_{i_t}(\Xs)) \right\rangle - 2 \left\langle \X^{t+1} - \Xs, \frac{\mu}{Mp(i_t)} \nabla f_{i_t}(\Xs) \right\rangle  \\
&~~~+ (\eta^2-1)\left\|\X^t -\Xs - \frac{\mu}{Mp(i_t)} \nabla f_{i_t}(\X^t)\right\|_F^2.
\end{align*}

Define $U^t$ as the subspace of $\RRR^{n_1\times n_2\times\ldots\times n_d}$ spanned by $\Xs,~\X^t$, and $\X^{t+1}$. Define $\PP_{U^t}:\RRR^{n_1\times n_2\times\ldots\times n_d} \rightarrow U^t$ as the orthogonal projection onto $U^t$. It can be seen that $\PP_{U^t}(\Xs) = \Xs$, $\PP_{U^t}(\X^t) = \X^t$, $\PP_{U^t}(\X^{t+1}) = \X^{t+1}$, and rank$(\PP_{U^t}(\Y)) \leq 3\bm{r}$ for all $\Y \in \RRR^{n_1\times n_2\times\ldots\times n_d}$. Then, we have
\begin{align*}
&\|\X^{t+1} - \X\|_F^2 \\
\leq & 2 \left\langle \X^{t+1} - \Xs,\X^t -\Xs - \frac{\mu}{Mp(i_t)} \PP_{U^t}(\nabla f_{i_t}(\X^t)-\nabla f_{i_t}(\Xs)) \right\rangle - 2 \left\langle \X^{t+1} - \Xs, \frac{\mu}{Mp(i_t)} \PP_{U^t}(\nabla f_{i_t}(\Xs)) \right\rangle  \\
&+ (\eta^2-1)\left\|\X^t -\Xs - \frac{\mu}{Mp(i_t)} \nabla f_{i_t}(\X^t)\right\|_F^2\\
\leq & 2\| \X^{t+1} - \Xs\|_F \left( \left\| \X^t -\Xs - \frac{\mu}{Mp(i_t)} \PP_{U^t}(\nabla f_{i_t}(\X^t)-\nabla f_{i_t}(\Xs))  \right\|_F  + \left\|  \frac{\mu}{Mp(i_t)} \PP_{U^t}(\nabla f_{i_t}(\Xs)) \right\|_F \right)\\
&+ (\eta^2-1)\left\|\X^t -\Xs - \frac{\mu}{Mp(i_t)} \nabla f_{i_t}(\X^t)\right\|_F^2.
\end{align*}

In general, it follows from $x^2-2ux-v \leq 0$ that $
x\leq u+\sqrt{u^2+v} \leq 2u+\sqrt{v}$, where the first inequality follows from completing the square and the second inequality from $\sqrt{a + b} \leq \sqrt{a} + \sqrt{b}$. Then, we can further obtain
\begin{align*}
\|\X^{t+1} - \Xs\|_F \leq &2 \left( \left\| \X^t -\Xs - \frac{\mu}{Mp(i_t)} \PP_{U^t}(\nabla f_{i_t}(\X^t)-\nabla f_{i_t}(\Xs))  \right\|_F  + \left\|  \frac{\mu}{Mp(i_t)} \PP_{U^t}(\nabla f_{i_t}(\Xs)) \right\|_F \right)\\
& + \sqrt{\eta^2-1}	\left\|\X^t -\Xs - \frac{\mu}{Mp(i_t)} \nabla f_{i_t}(\X^t)\right\|_F\\
\leq &2 \left( \left\| \X^t -\Xs - \frac{\mu}{Mp(i_t)} \PP_{U^t}(\nabla f_{i_t}(\X^t)-\nabla f_{i_t}(\Xs))  \right\|_F  + \left\|  \frac{\mu}{Mp(i_t)} \PP_{U^t}(\nabla f_{i_t}(\Xs)) \right\|_F \right)\\
& + \sqrt{\eta^2-1} \left(	\left\|\X^t -\Xs - \frac{\mu}{Mp(i_t)} (\nabla f_{i_t}(\X^t) - \nabla f_{i_t}(\Xs)) \right\|_F + \left\|\frac{\mu}{Mp(i_t)}\nabla f_{i_t}(\Xs) \right\|_F \right).
\end{align*}

Let $I_t$ be the set containing all indices $i_1,i_2,\ldots,i_t$ randomly selected at and before iteration $t$, i.e., $I_t = \{ i_1, i_2, \ldots, i_t \}.$ Denote the conditional expectation as $\EEE_{i_t | I_{t-1}} \| \X^{t+1}-\Xs \|_F \triangleq \EEE_{i_t}( \| \X^{t+1}-\Xs \|_F | I_{t-1})$. Note  that $\X^t$ can be viewed as a fixed tensor when conditioning on $I_{t-1}$. Taking conditional expectation on both sides of the above inequality gives
\begin{align*}
& \EEE_{i_t | I_{t-1}} \|\X^{t+1}-\Xs\|_F	\\
\leq &2 \left( \EEE_{i_t}\left\| \X^t -\Xs - \frac{\mu}{Mp(i_t)} \PP_{U^t}(\nabla f_{i_t}(\X^t)-\nabla f_{i_t}(\Xs))  \right\|_F  + \EEE_{i_t}\left\|  \frac{\mu}{Mp(i_t)} \PP_{U^t}(\nabla f_{i_t}(\Xs)) \right\|_F \right)\\
& + \sqrt{\eta^2-1} \left(	\EEE_{i_t}\left\|\X^t -\Xs - \frac{\mu}{Mp(i_t)} (\nabla f_{i_t}(\X^t) - \nabla f_{i_t}(\Xs)) \right\|_F + \EEE_{i_t }\left\|\frac{\mu}{Mp(i_t)}\nabla f_{i_t}(\Xs) \right\|_F \right)\\
\leq &2 \left(  \sqrt{1-(2-\mu\alpha_{3\r})\mu \rho_{3\r}^-}\|\X^t -\Xs \|_F  + \EEE_{i_t}\left\|  \frac{\mu}{Mp(i_t)} \PP_{U^t}(\nabla f_{i_t}(\Xs)) \right\|_F \right)\\
& + \sqrt{\eta^2-1} \left(	\sqrt{1+\mu^2 \alpha_{3\r} \rho_{3\r}^+ - 2\mu \rho_{3\r}^-} \|\X^t -\Xs \|_F   + \EEE_{i_t }\left\|\frac{\mu}{Mp(i_t)}\nabla f_{i_t}(\Xs) \right\|_F \right)\\
\leq & \left(2\sqrt{1-(2-\mu\alpha_{3\r})\mu \rho_{3\r}^-}+ \sqrt{\eta^2-1} \sqrt{1+\mu^2 \alpha_{3\r} \rho_{3\r}^+ - 2\mu \rho_{3\r}^-} \right)\|\X^t -\Xs \|_F\\
&+ \frac{\mu}{M\min_{i\in[M]} p(i)}\left(2\EEE_{i_t}\left\|  \PP_{U^t}(\nabla f_{i_t}(\Xs)) \right\|_F+ \sqrt{\eta^2-1}\EEE_{i_t }\left\|\nabla f_{i_t}(\Xs) \right\|_F \right)\\
=& \kappa \|\X^t -\Xs \|_F + \sigma_{\Xs},
\end{align*}
where the above equality follows from \eqref{eqn:keylem11} and \eqref{eqn:keylem12} in Lemma~\ref{lem:key1}.

By taking expectation on both sides of the above inequality with respect to $I_{t-1}$, we get
\begin{align*}
\EEE_{I_t} \|\X^{t+1}-\Xs\|_F \leq \kappa 	\EEE_{I_{t-1}}\|\X^t-\Xs \|_F + \sigma_{\Xs}, 
\end{align*}
which further implies that
\begin{align*}
\EEE_{I_t} \|\X^{t+1}-\Xs\|_F \leq \kappa^{t+1} 	\|\X^0-\Xs \|_F + \sigma_{\Xs}. 
\end{align*}
\end{proof}


\section{Numerical Simulations}
\label{sec:simu}


In this section, we illustrate the performance of the proposed StoTIHT algorithm by conducting a variety of experiments with both synthetic and real data. In what follows, we define the relative recovery error as $\frac{\|\Xs-\Xh\|_F}{\|\Xs\|_F}$ and consider the recovery as a success if the relative recovery error is less than $10^{-5}$. The experiments were conducted in Matlab R2014a installed on a laptop with an Intel(R) Core(TM) i7-4700MQ CPU @ 2.40GHz and 64G RAM.


In the first experiment, we work on third-order tensors (i.e., $d=3$) and set the parameters $n_1 = 5$, $n_2 = 5$, and $n_3 = 6$. The true Tucker-rank is $\r = (1,2,2)$.\footnote{Note that we choose a relatively small tensor size and rank in this experiment to reduce the problem size (and thus, computational time) while still demonstrating how the proposed algorithm performs.} 
With these parameters, we then generate the core tensor $\S\in\RRR^{r_1\times\cdots\times r_d}$ and the basis $\U^{(i)}\in\RRR^{n_i \times r_i}$ as random Gaussian tensors or matrices with entries following $\NN(0,1)$. Then, we create a low-rank tensor $\Xs$ as the target tensor according to the Tucker decomposition given in~\eqref{eqn:tucker_decomp}. We set $m=360$.
The sensing tensors $\A_i$ with $i=1,\ldots,m$ are also generated as random Gaussian tensors with entries satisfying $\NN(0,1)$, but followed by a normalization. In particular, each entry of the sensing tensors is rescaled by a factor of $1/\|\A\|_F$, where $\A \in \RRR^{m \times (n_1 n_2 \cdots n_d)}$ is a matrix with the $i$-th row being the vectorized version of $\A_i$. Then, we obtain the measurements $\y\in \RRR^{m}$ according to the measurement model~\eqref{eqn:meas_model}. For simplicity, we set $p(i) = \frac 1 M$ with $i = 1,\ldots,M$. Here, $M = \frac m b$ is the number of batches and we experiment with different batch sizes $b$, as shown in Figures~\ref{test_StoTIHT_epoch_m100} and~~\ref{test_StoTIHT_epoch_m100_time}. Note that in the case when $b=m$, the proposed StoTIHT algorithm reduces to the regular TIHT algorithm (black dashed line in Figures~\ref{test_StoTIHT_epoch_m100} and~\ref{test_StoTIHT_epoch_m100_time}). We set the stepsize $\mu=0.46m$ and use $\Xh$ to denote the recovered low-rank tensor. We present how the (a) cost function and (b) relative recovery error behave with respect to the number of epochs\footnote{An epoch is defined as the number of iterations needed to use $m$ rows. For deterministic algorithms like TIHT algorithm, an epoch is just one iteration, while for our StoTIHT algorithm, an epoch is $m/b$ iterations.} in Figure~\ref{test_StoTIHT_epoch_m100} and running time in Figure~\ref{test_StoTIHT_epoch_m100_time}. The presented results are all averaged over 100 trials. It can be seen that our proposed StoTIHT algorithm converges much faster than the original TIHT algorithm in this high dimensional setting.

Next, we fix the batch size as $b=0.5 m$ and repeat the above experiment with several values of $m$ and $\r$ as shown in Figure~\ref{test_StoTIHT_m_r}. We allow a maximum of 200 epochs for both the TIHT and StoTIHT algorithms in this experiment. The presented results are all averaged over 100 trials. We set the stepsize $\mu=m$. It can be seen that in this region with small $m$, the percentage of trials with successful recovery increases as we increase the number of measurements $m$ and decrease the Tucker rank. Then, we repeat this experiment with large number of measurements. We fix the batch size at $b=0.25m$ and set the maximum number of epochs at 80. We change the stepsize back to $\mu=0.5m$. As is shown in Figure~\ref{test_StoTIHT_m_r2}, in the region with a large $m$ and a small number of epochs, our proposed StoTIHT algorithm always successfully recovers the tensor and significantly outperforms the TIHT algorithm. This observation also coincides with Figure~\ref{test_StoTIHT_epoch_m100}, which indicates that the proposed StoTIHT algorithm converges much faster than the TIHT algorithm in the large scale setting. Therefore, we conjecture that the TIHT algorithm needs more epochs to get a successful recovery. To verify this conjecture, we conduct another experiment and present the number of epochs needed to achieve a successful recovery for different $r$ and $m$ in Figure~\ref{test_StoTIHT_m_r3}. We decrease the stepsize to $\mu=0.4m$. It can be seen that the proposed StoTIHT algorithm needs fewer epochs to get a perfect recovery especially when $m$ is large.

Finally, we test the proposed StoTIHT algorithm on real candle video data, which can be downloaded from the Dynamic Texture Toolbox in \url{http://www.vision.jhu.edu/code/}. To keep the recovery problem in a relatively small dimension, we truncate the video frames to be of size $30\times 30$ and only keep the first 10 frames. Then, the tensor to be recovered is of size $30\times 30\times 10$. We assume this tensor has Tucker-rank $\r = (8,8,2)$. Then, we use the same strategy as in the synthetic experiments to obtain the linear measurements with $m=3\times 10^4$. We set the batch size as $b=0.25m$ for the StoTIHT algorithm. The last two true candle frames and the recovered ones are shown in Figure~\ref{test_StoTIHT_video}. We also present the cost function and relative recovery error in Figure~\ref{test_StoTIHT_video_cost_err}.

\begin{figure}
\begin{minipage}{0.49\linewidth}
\centering
\includegraphics[width=2.8in]{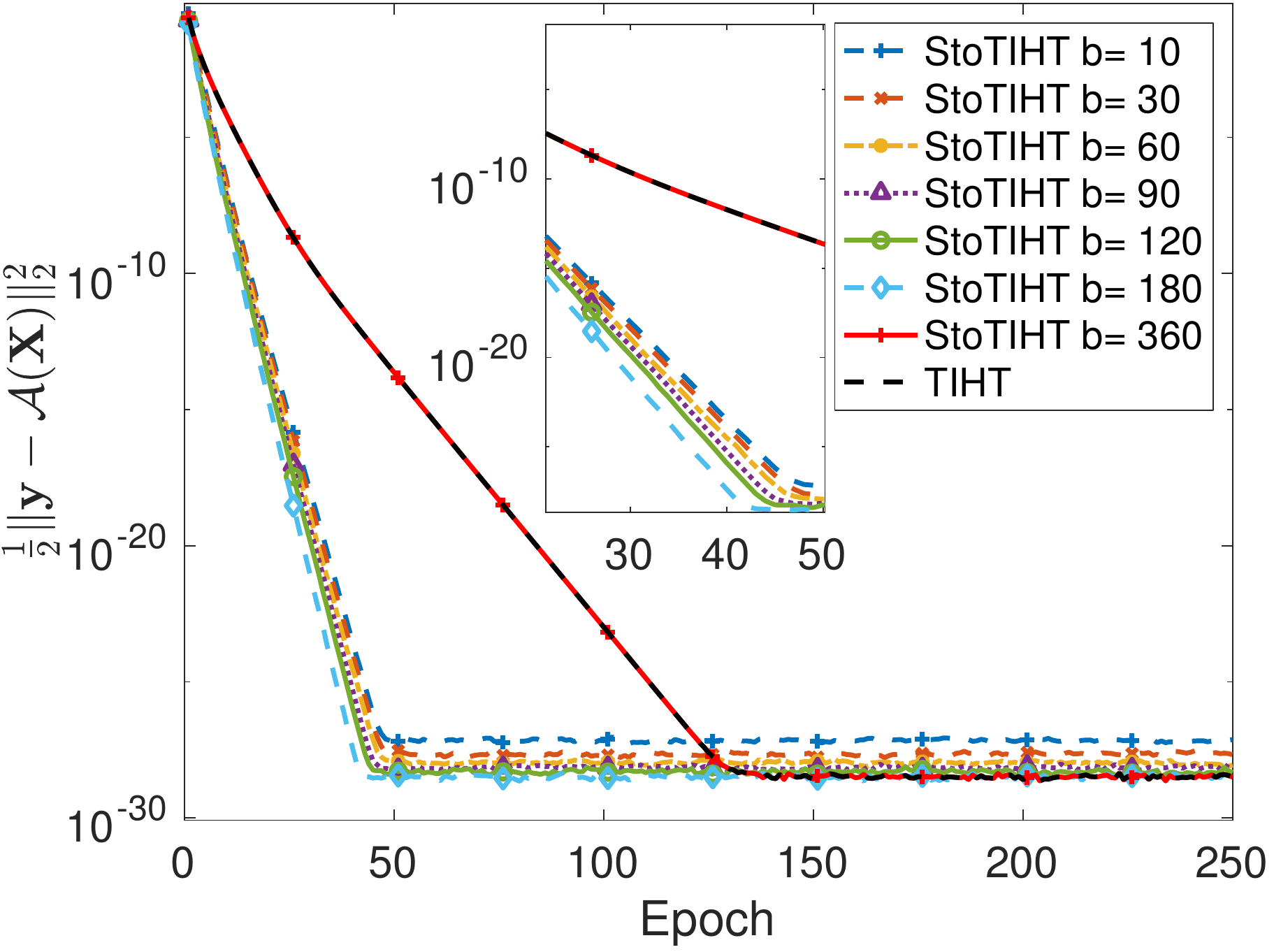}
\centerline{\footnotesize{(a)}}
\end{minipage}
\hfill
\begin{minipage}{0.49\linewidth}
\centering
\includegraphics[width=2.8in]{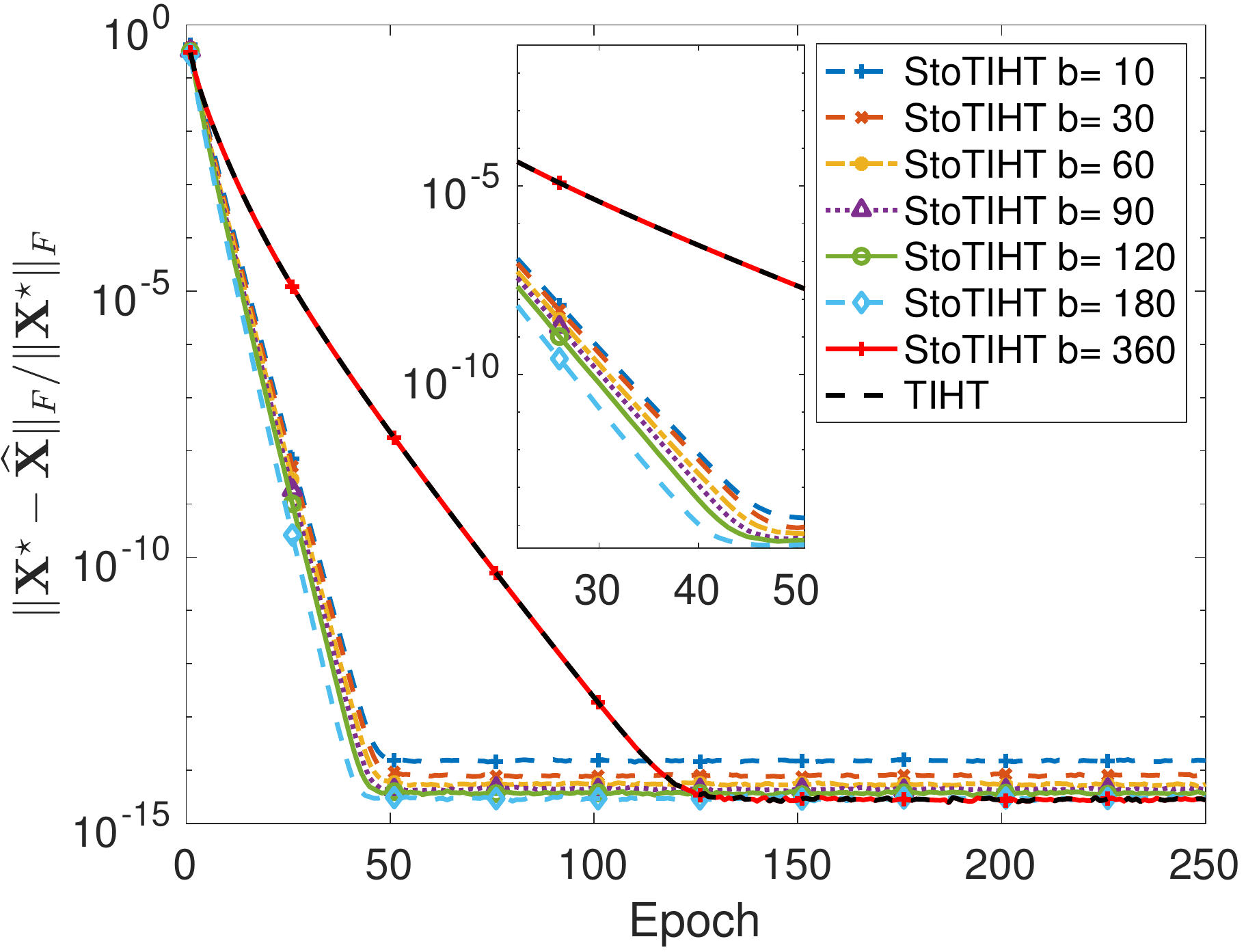}
\centerline{\footnotesize{(b)}}
\end{minipage}
\caption{Low-rank tensor recovery: $n_1 = 5$, $n_2 = 5$, $n_3 = 6$, $m = 360$, $\bm{r} = (1,2,2)$.}
\label{test_StoTIHT_epoch_m100}
\end{figure}

\begin{figure}
\begin{minipage}{0.49\linewidth}
\centering
\includegraphics[width=2.8in]{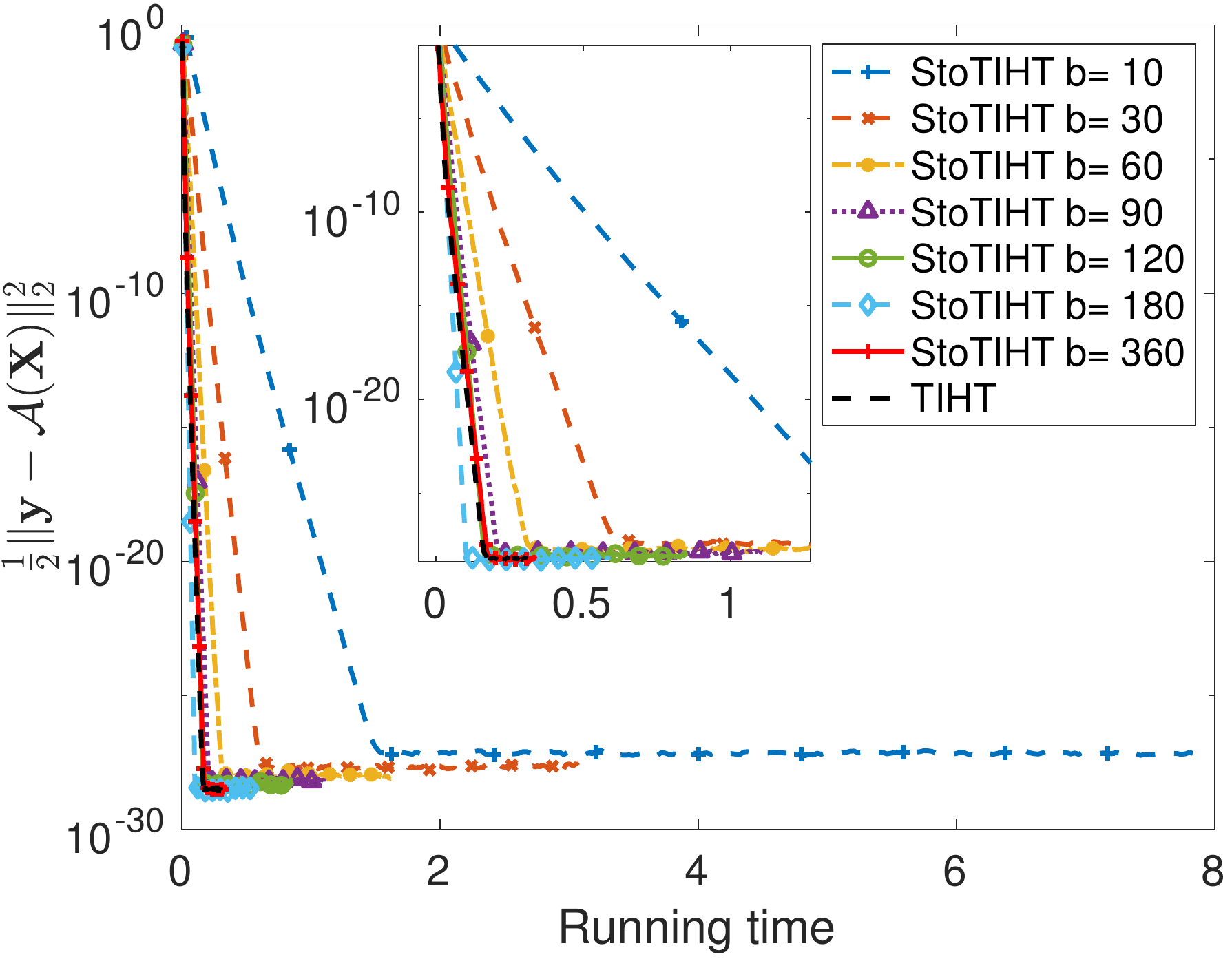}
\centerline{\footnotesize{(a)}}
\end{minipage}
\hfill
\begin{minipage}{0.49\linewidth}
\centering
\includegraphics[width=2.8in]{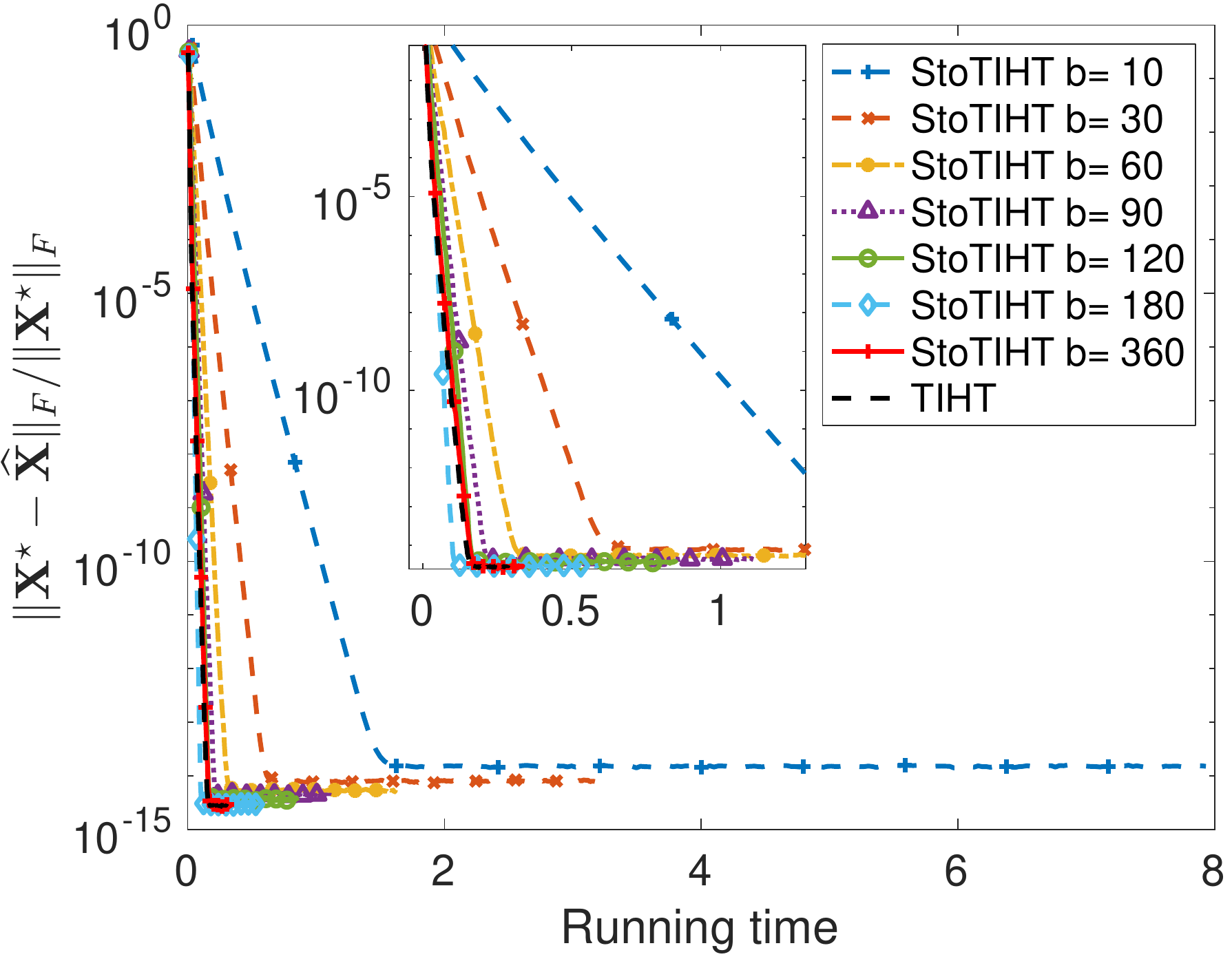}
\centerline{\footnotesize{(b)}}
\end{minipage}
\caption{Low-rank tensor recovery: $n_1 = 5$, $n_2 = 5$, $n_3 = 6$, $m = 360$, $\bm{r} = (1,2,2)$.}
\label{test_StoTIHT_epoch_m100_time}
\end{figure}

\begin{figure}
\begin{minipage}{0.49\linewidth}
\centering
\includegraphics[width=2.8in]{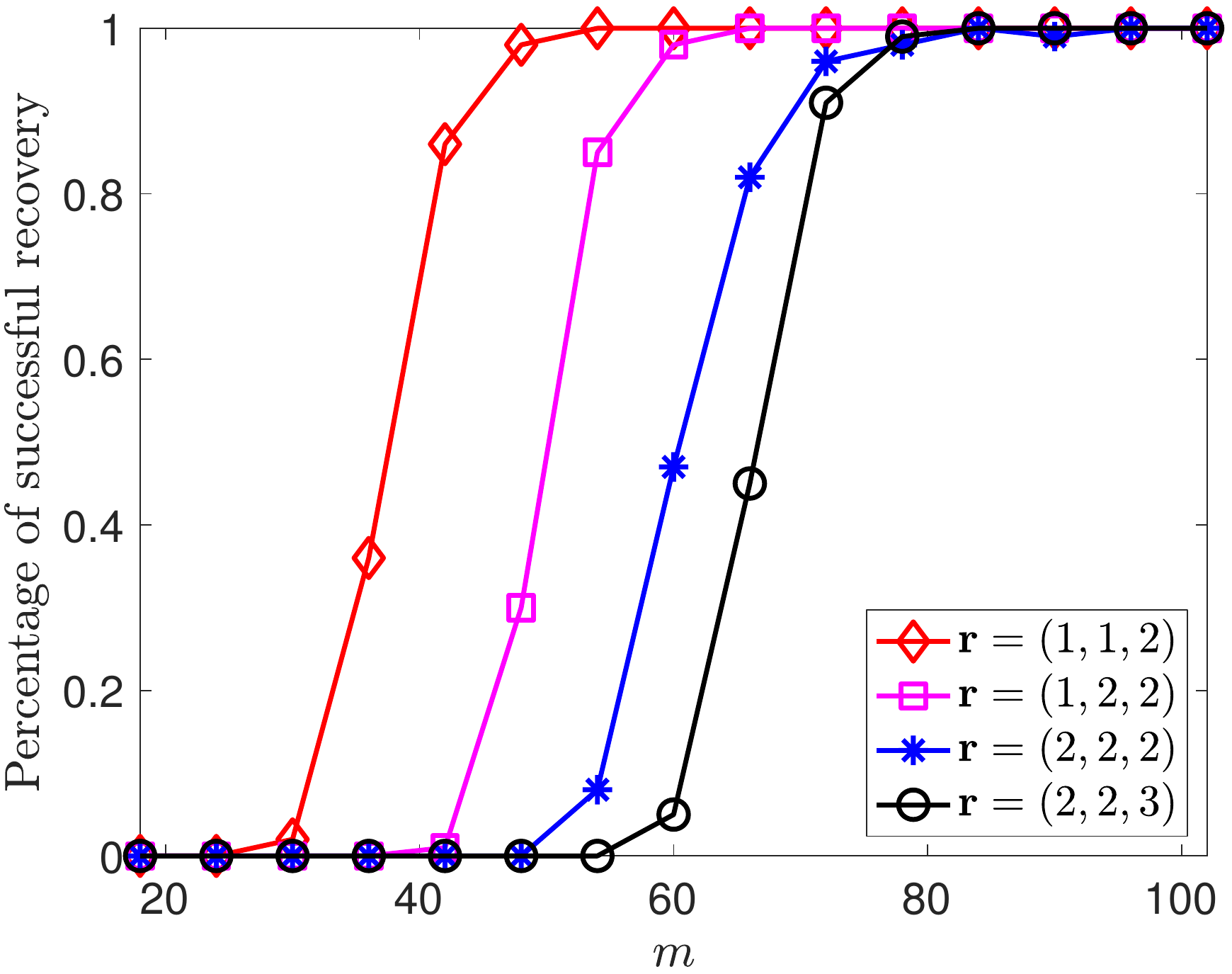}
\centerline{\footnotesize{(a) TIHT}}
\end{minipage}
\hfill
\begin{minipage}{0.49\linewidth}
\centering
\includegraphics[width=2.8in]{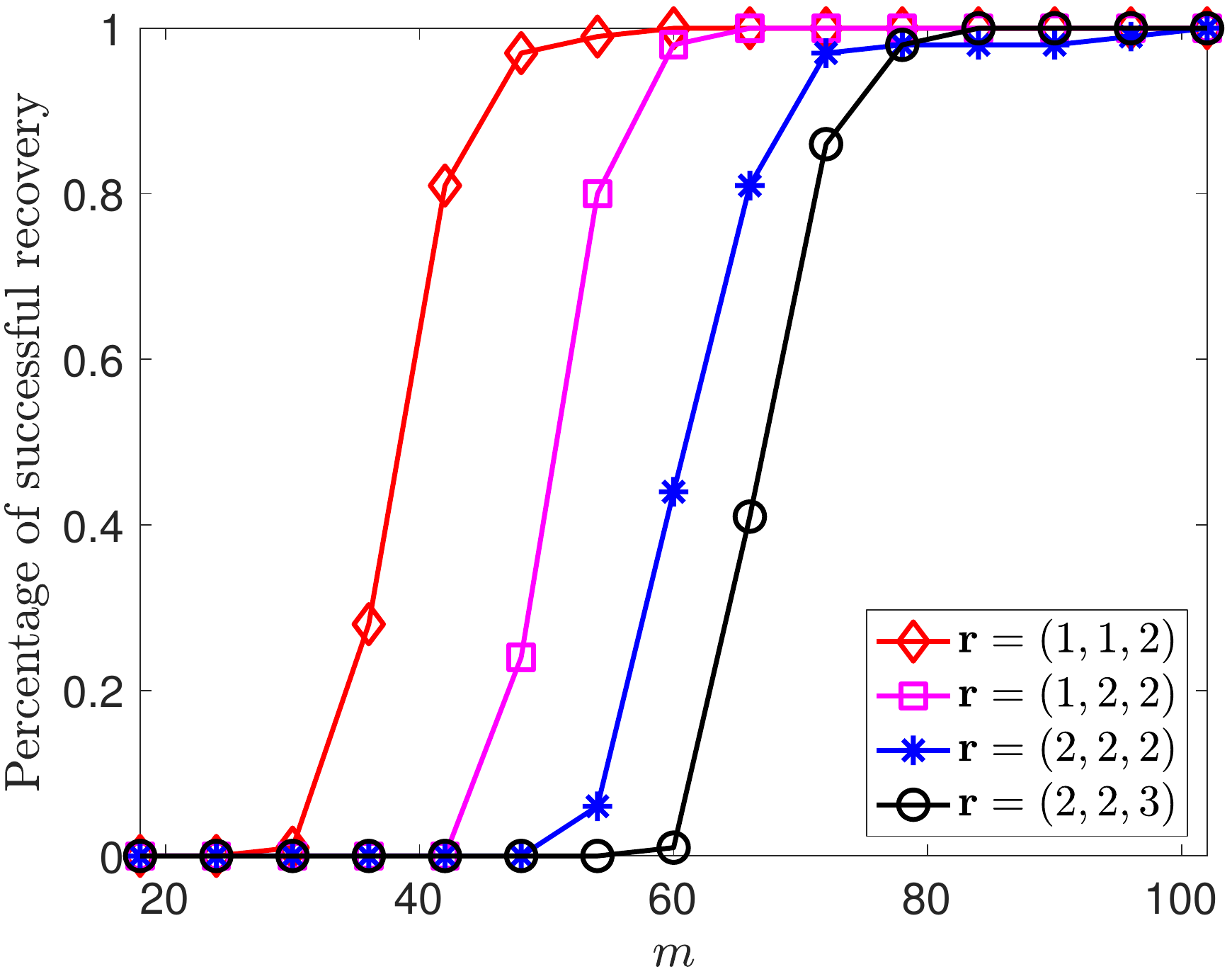}
\centerline{\footnotesize{(b) StoTIHT}}
\end{minipage}
\caption{Low-rank tensor recovery: $n_1 = 5$, $n_2 = 5$, $n_3 = 6$, $b=0.5m$.}
\label{test_StoTIHT_m_r}
\end{figure}

\begin{figure}
\begin{minipage}{0.49\linewidth}
\centering
\includegraphics[width=2.8in]{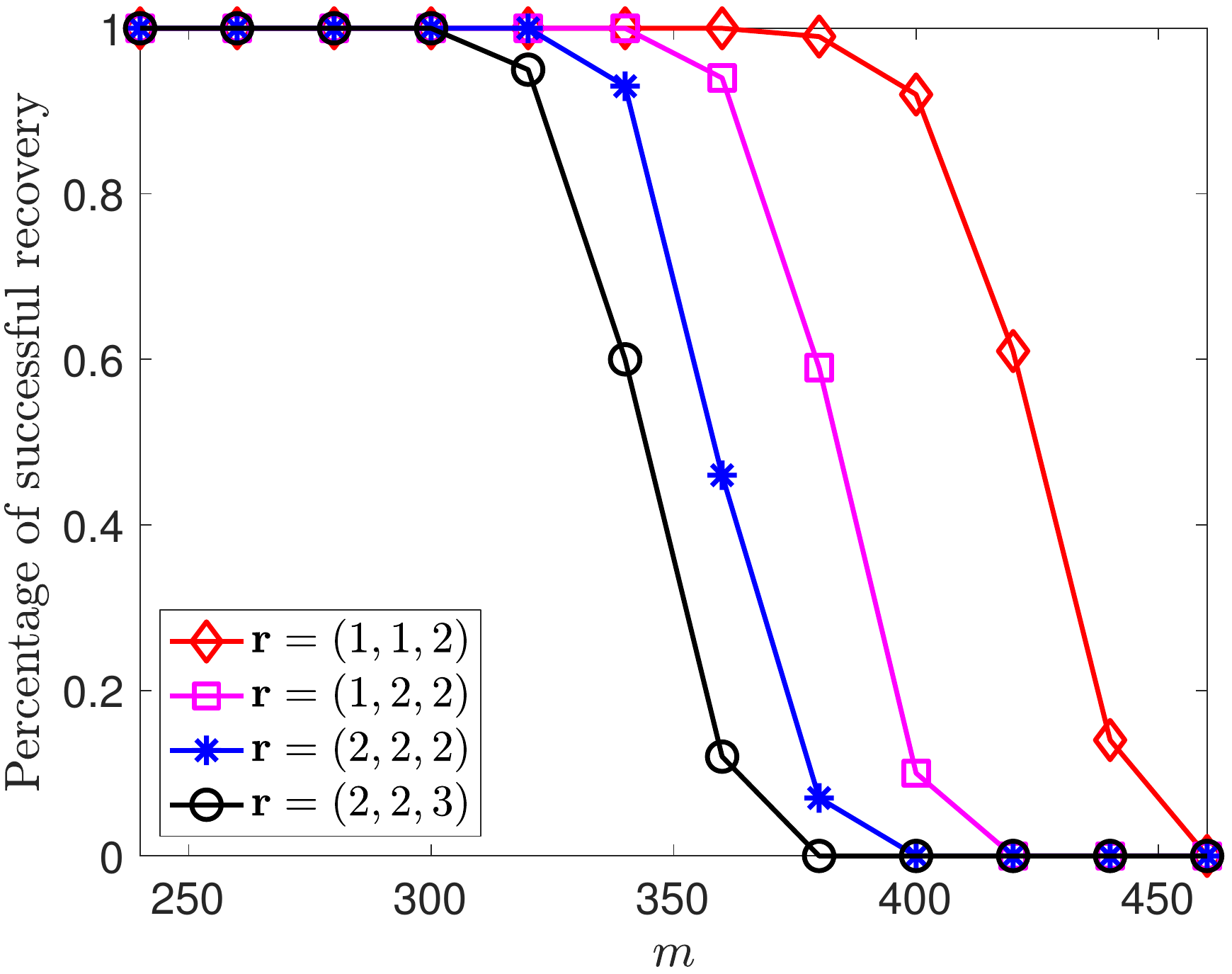}
\centerline{\footnotesize{(a) TIHT}}
\end{minipage}
\hfill
\begin{minipage}{0.49\linewidth}
\centering
\includegraphics[width=2.8in]{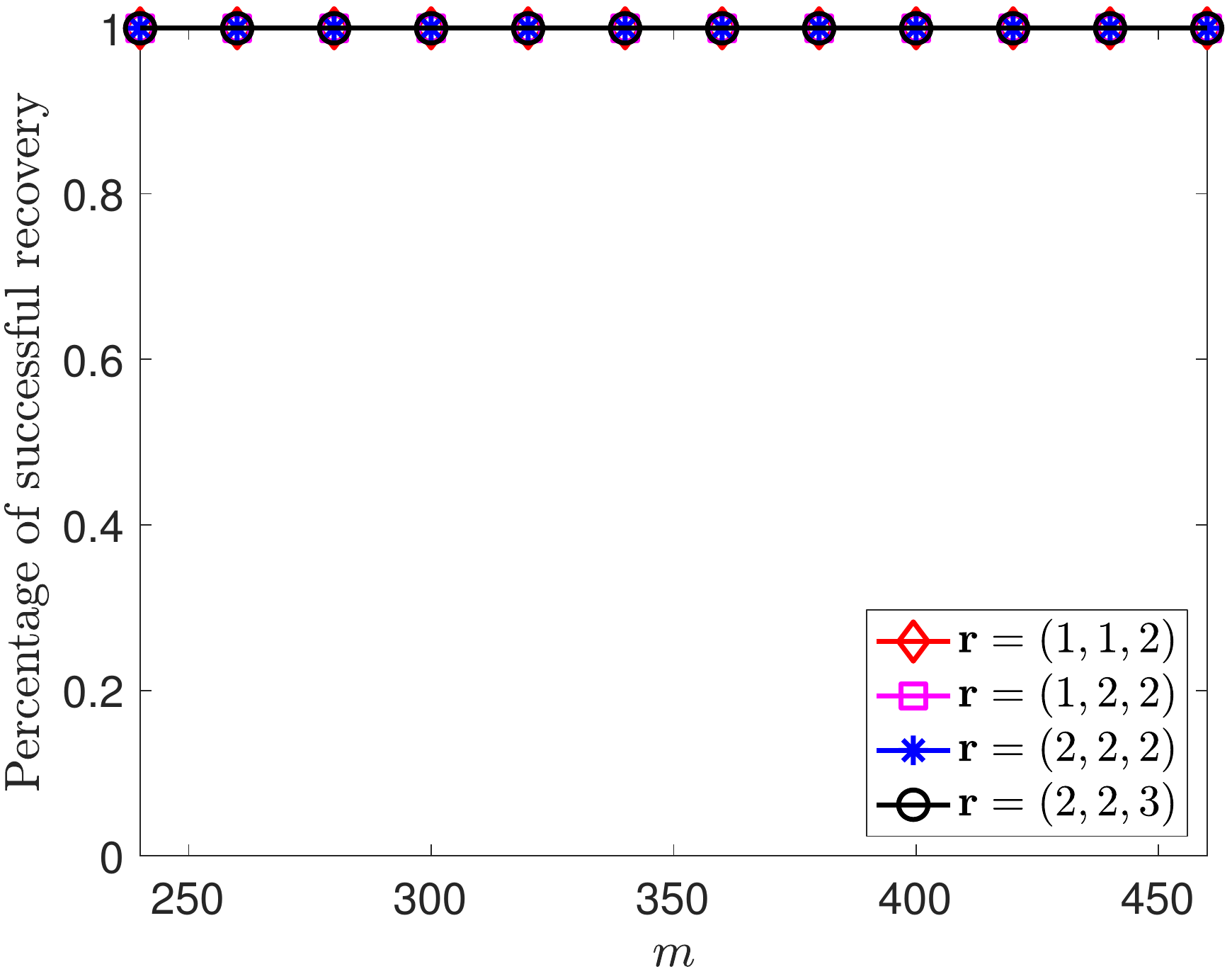}
\centerline{\footnotesize{(b) StoTIHT}}
\end{minipage}
\caption{Low-rank tensor recovery: $n_1 = 5$, $n_2 = 5$, $n_3 = 6$, $b=0.25m$.}
\label{test_StoTIHT_m_r2}
\end{figure}

\begin{figure}
\begin{minipage}{0.49\linewidth}
\centering
\includegraphics[width=2.8in]{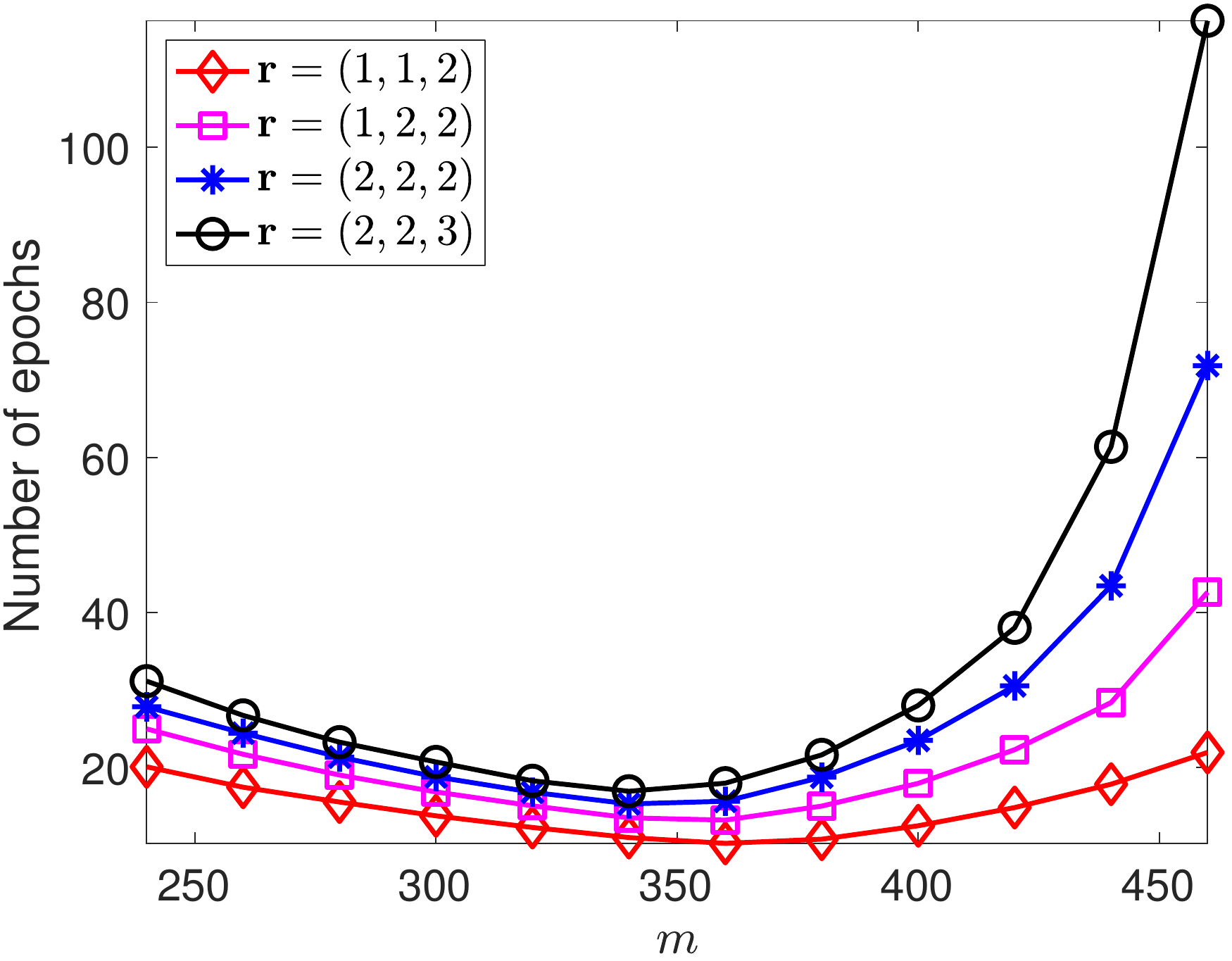}
\centerline{\footnotesize{(a) TIHT}}
\end{minipage}
\hfill
\begin{minipage}{0.49\linewidth}
\centering
\includegraphics[width=2.8in]{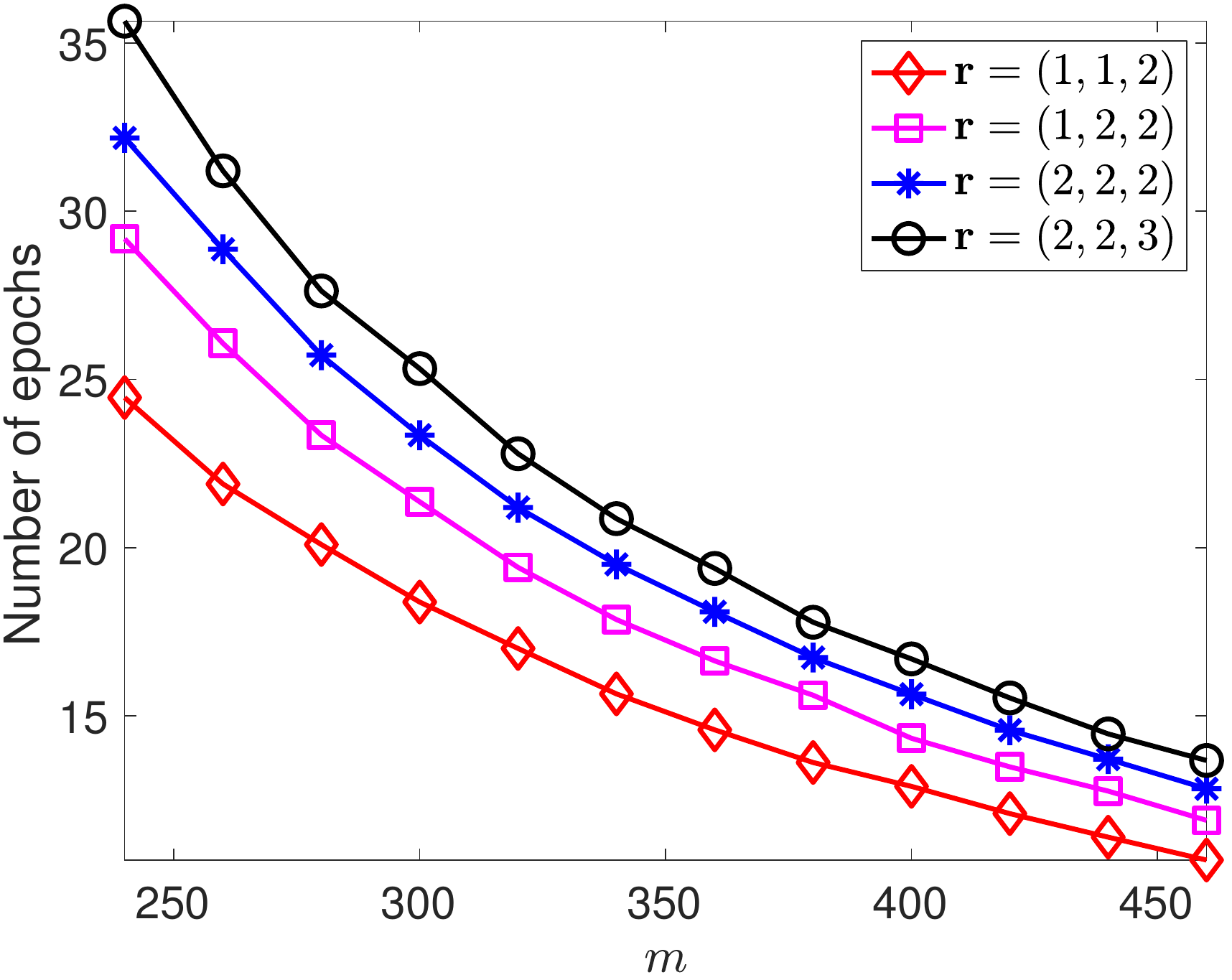}
\centerline{\footnotesize{(b) StoTIHT}}
\end{minipage}
\caption{Low-rank tensor recovery: $n_1 = 5$, $n_2 = 5$, $n_3 = 6$, $b=0.25m$.}
\label{test_StoTIHT_m_r3}
\end{figure}

\begin{figure}
\begin{minipage}{0.31\linewidth}
\centering
\includegraphics[width=1.9in]{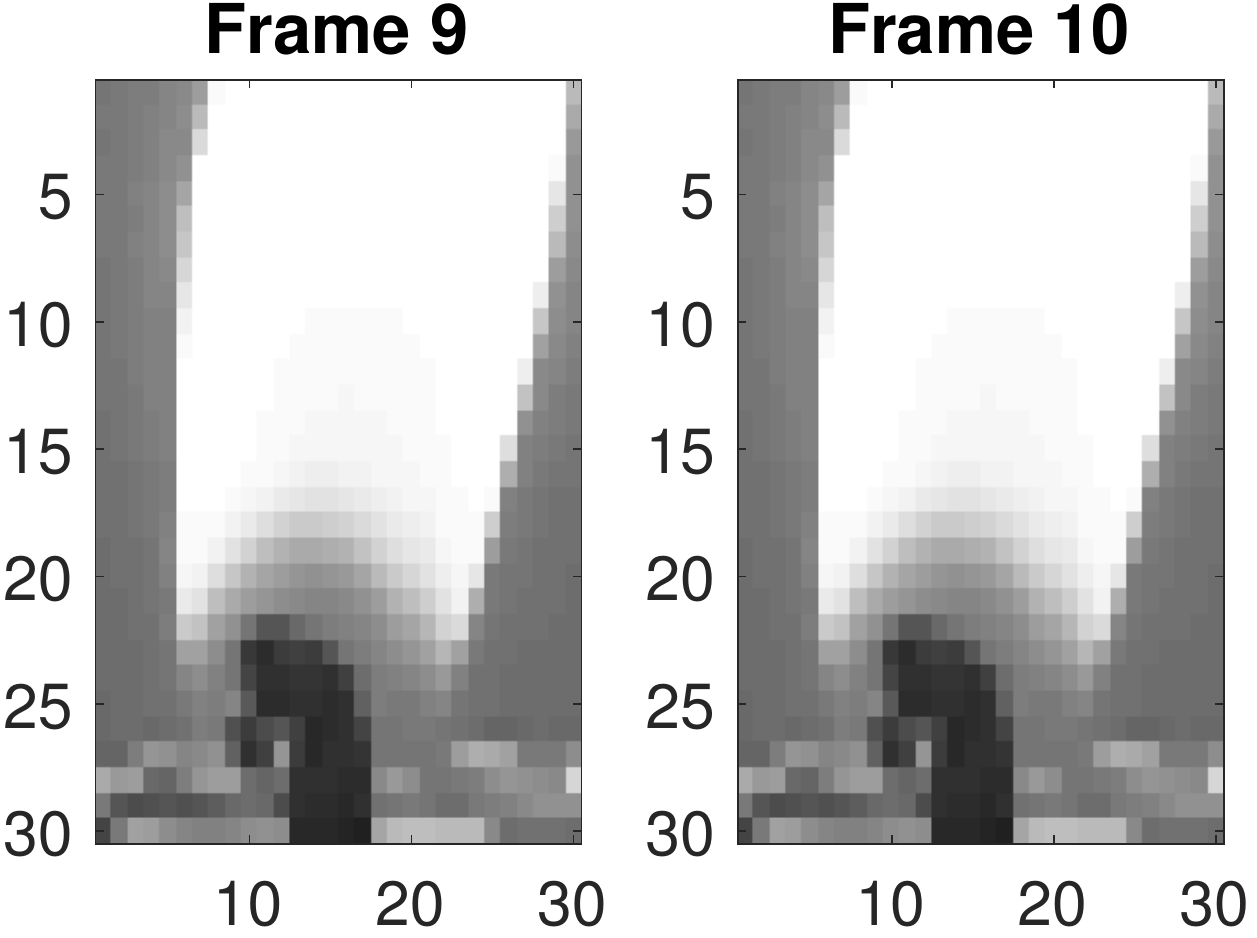}
\centerline{\footnotesize{(a) The original frames}}
\end{minipage}
\hfill
\begin{minipage}{0.31\linewidth}
\centering
\includegraphics[width=1.9in]{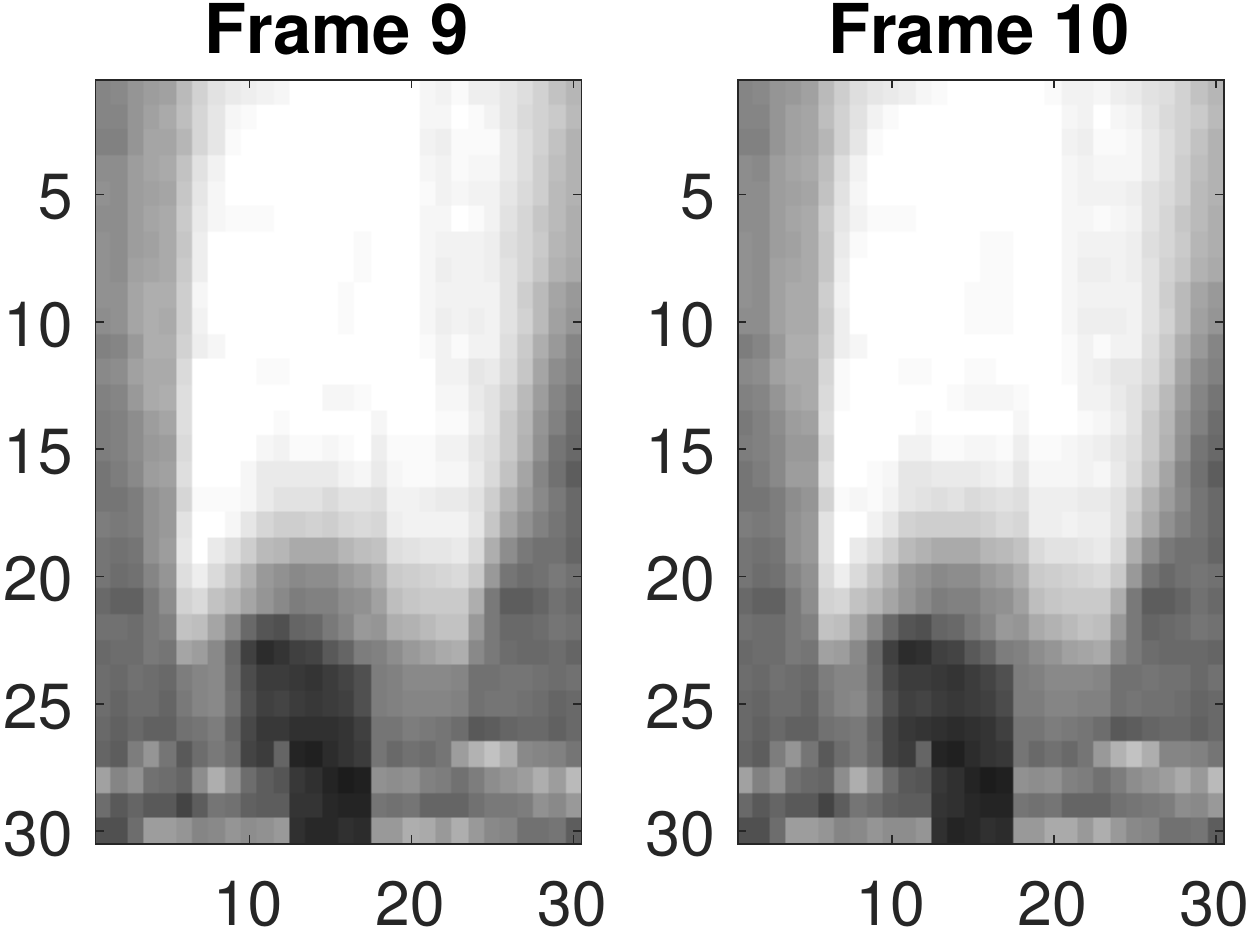}
\centerline{\footnotesize{(b) The recovered frames by TIHT}}
\end{minipage}
\hfill
\begin{minipage}{0.31\linewidth}
\centering
\includegraphics[width=1.9in]{test_StoTIHT_video_estimate2_TIHT}
\centerline{\footnotesize{(c) The recovered frames by StoTIH}}
\end{minipage}
\caption{Candle video recovery: $n_1 = 30$, $n_2 = 30$, $n_3 = 10$, $b=0.25m$.}
\label{test_StoTIHT_video}
\end{figure}


\begin{figure}
\begin{minipage}{0.49\linewidth}
\centering
\includegraphics[width=2.8in]{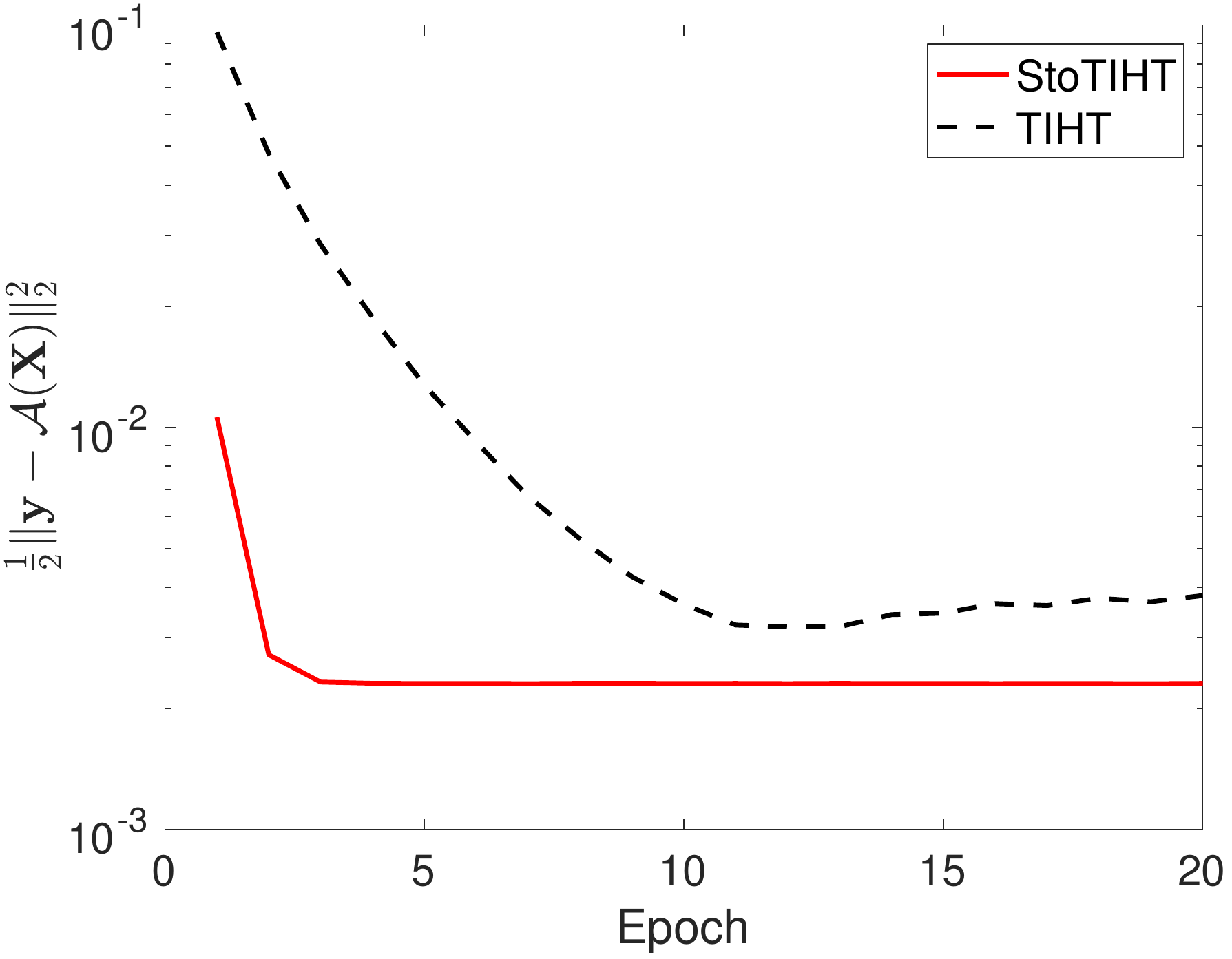}
\centerline{\footnotesize{(a) Cost function}}
\end{minipage}
\hfill
\begin{minipage}{0.49\linewidth}
\centering
\includegraphics[width=2.8in]{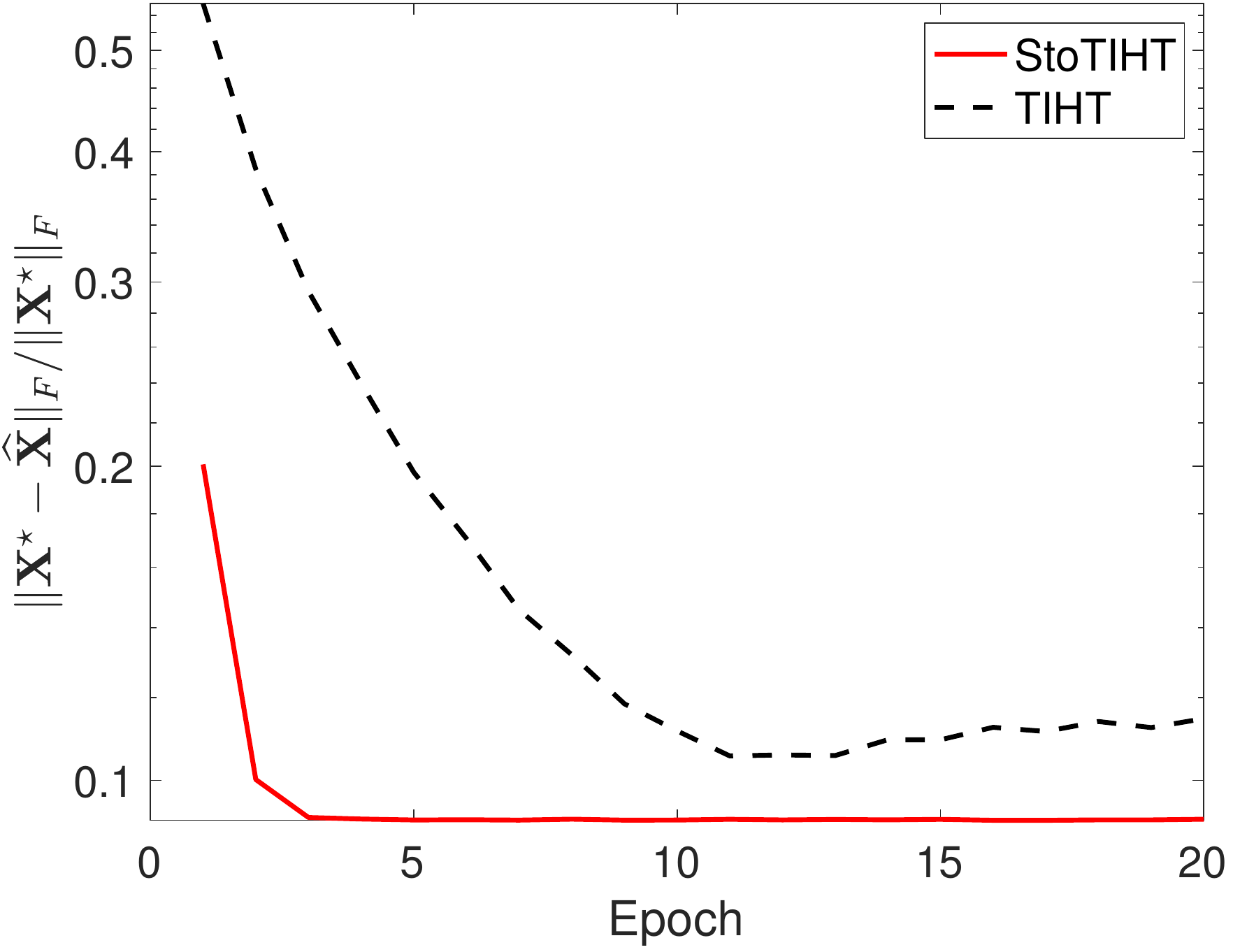}
\centerline{\footnotesize{(b) Relative error}}
\end{minipage}
\caption{Candle video recovery: $n_1 = 30$, $n_2 = 30$, $n_3 = 10$, $b=0.25m$.}
\label{test_StoTIHT_video_cost_err}
\end{figure}

\section{Conclusion}
\label{sec:conc}

In this work, we propose the StoTIHT algorithm by applying a stochastic approach to the TIHT algorithm to address the problem of recovering a low-Tucker-rank tensor from its linear measurements. We have developed theory to guarantee the linear convergence of the proposed algorithm. Our simulation results also indicate that the proposed StoTIHT algorithm significantly outperforms the original TIHT algorithm in the large scale setting. In particular, the proposed StoTIHT algorithm converges much faster and can achieve a lower recovery error when compared with the original TIHT algorithm in the large scale setting.

\section*{Acknowledgments}

This material is based upon work supported by the National Security Agency under Grant No. H98230-19-1-0119, The Lyda Hill Foundation, The McGovern Foundation, and Microsoft Research, while the authors were in residence at the Mathematical Sciences Research Institute in Berkeley, California, during the summer of 2019 as part of the Summer Research for Women in Mathematics (SWiM) program. In addition, Li was supported by the NSF grants CCF-1409258, CCF-1704204, and the DARPA Lagrange Program under ONR/SPAWAR contract N660011824020. Grotheer was supported by the Goucher College Summer Research grant. Needell was supported by NSF CAREER DMS $\#1348721$ and NSF BIGDATA DMS $\#1740325$. Qin was supported by the NSF DMS $\#1941197$.

\bibliographystyle{elsarticle-num-names}
\bibliography{tucker}



\end{document}